\newcommand{\C}{{\mathbb{C}}}          % \C       = complexos
\newcommand{\R}{{\mathbb{R}}}          % \R       = reais
\newcommand{\Z}{{\mathbb{Z}}}          % \Z       = inteiros
\newcommand{\rr}{\rightarrow}
\newcommand{\lrr}{\longrightarrow}
\newcommand{\calR}{{{\cal R}^\xi}}             %
\newcommand{\na}{{\nabla}}
\newcommand{\nag}{{\nabla^g}}
\newcommand{\grad}{{\mathrm{grad}}\,}
\newcommand{\dx}{{\mathrm{d}}}
\newcommand{\inv}[1]{{#1}^{-1}}
\newcommand{\cinf}[1]{{\mathrm{C}}^\infty_{#1}}
\newcommand{\Scal}{{\mathrm{Scal}}}
\newcommand{\expo}{{\mathrm{e}}}
\newtheorem{teo}{Theorem}[section]
\newtheorem{coro}{Corollary}[section]
\newtheorem{prop}{Proposition}[section]
\newenvironment{Rema}[1][Remark.]{\begin{trivlist}
\item[\hskip \labelsep {\bfseries #1}]}{\end{trivlist}}
\newenvironment{meuenumerate}
{\begin{enumerate}
  \setlength{\itemsep}{2.5pt}
  \setlength{\parskip}{-1pt}
  \setlength{\parsep}{-1pt}}
{\end{enumerate}}
\def\cyclic{\mathop{\kern0.9ex{{+}
\kern-2.2ex\raise-.28ex\hbox{\Large\hbox{$\circlearrowright$}}}}\limits}
\title{Homotheties and topology of tangent sphere bundles}
\author{R. Albuquerque\footnote{Departamento de Matem\'atica da Universidade de \'Evora and Centro de Investiga\c c\~ao em Matem\'atica e Aplica\c c\~oes (CIMA), Rua Rom\~ao Ramalho, 59, 671-7000 \'Evora, Portugal.}\\ 
\vspace{2mm}
rpa@uevora.pt}
\begin{document}

%\begin{color}{DarkBlue}
%\begin{color}{black}

\maketitle

%\date{\today} %{1 dezembro 2009}

\markright{\sl\hfill  Albuquerque \hfill}

\begin{abstract}

We prove a Theorem on homotheties between two given tangent sphere bundles
$S_rM$ of a Riemannian manifold $M,g$ of $\dim\geq3$, assuming different variable radius
functions $r$ and weighted Sasaki metrics induced by the conformal class
of $g$. New examples are shown of manifolds with constant positive or with constant negative scalar curvature which are not Einstein. Recalling results on the associated almost complex structure $I^G$ and symplectic structure $\omega^G$ on the manifold $TM$, generalizing the well-known structure of
Sasaki by admitting weights and connections with torsion, we compute the Chern
and the Stiefel-Whitney characteristic classes of the \textit{manifolds} $TM$ and $S_rM$.

\end{abstract}

\vspace*{4mm}

{\bf Key Words:} tangent sphere bundle, isometry, characteristic classes.

\vspace*{2mm}

{\bf MSC 2010:} Primary: 55R25; Secondary: 53A30, 53C07, 53C17, 57R20

\vspace*{10mm}

The author acknowledges the support of Funda\c{c}\~{a}o Ci\^{e}ncia e Tecnologia, Portugal,  Centro de Investiga\c c\~ao em Matem\'atica e Aplica\c c\~oes da Universidade de \'Evora (CIMA-UE) and the sabbatical grant SFRH/BSAB/895/2009.

%\vspace*{10mm}

%\tableofcontents

\vspace*{10mm}

%\def\thesection{\arabic{section}}

%\section*{ }

%\setcounter{section}{1}

\section{Introduction}

This article consists of a study of the main properties which identify the
tangent sphere bundles $S_rM=\{u\in TM:\ \|u\|=r\}$ of a Riemannian manifold
$(M,g)$ with variable radius $r$ and induced weighted Sasaki metric
$g^{f_1,f_2}=f_1\pi^*g\oplus f_2\pi^*g$, where $f_1,f_2$ are $\R^+$-valued
functions on $M$ and $\pi:TM\rr M$ is the bundle map. Recall the well-known
Sasaki metric on $TM$  is just $g^S=g^{1,1}$ induced by the Levi-Civita
connection splitting of $TTM$. Our main results are as follows. 

We consider a conformal change $\lambda g$ by some function $\lambda$ on $M$, then take
both Levi-Civita connections of $g$ and $\lambda g$ and consider, accordingly, 
the lifts of these metrics to $TM$. We obtain very different weighted Sasaki
metrics on $TM$ and induced metrics on the sphere bundles, since the
horizontal subspaces are very different when $\lambda$ is non-constant. So one wishes
to compare the $S_rM$, with radius functions $r,s:M\rr\R^+$ and within the same
conformal class of $M$, through the map $u\stackrel{h}\mapsto\frac{s}{r\sqrt{\lambda}}u$.
For $M$ connected and of dimension $\geq3$ we prove:
\begin{equation}
 (S_rM,g^{f_1,f_2})\ \ \ \mbox{is homothetic via\ }h\ \mbox{to}\ \ \ (S_sM,(\lambda g)^{f'_1,f'_2})
\end{equation}
if and only if $\frac{f'_1}{f_1}\lambda=\frac{s^2}{r^2}\frac{f'_2}{f_2}$, the function
$\lambda$ is constant and one of the following conditions holds: (i) $s/r$ is constant or
(ii) $rs$ is constant. 

Equation (ii) is quite interesting, and reassuring if the reader suspects it is
true. As a corollary it says that, for any positive function $r$ on $M$,
$(S_rM,g^S)$ is isometric to $(S_{\frac{1}{r}}M,g^{1,r^4})$.

We give some applications in the treatment of the $S_sM_R$ of the space-form $M_R$, the locus of $x_1^2+\cdots+x_m^2\pm x_{m+1}^2=R^2$, which has constant sectional curvature $\pm1/R^2$. Using \cite{Alb4}, we prove in Theorem \ref{teoremasobrecurvaturaescalar} that, for $\dim M_R=m\geq3$, no matter the sign $\pm$ or the constants $R,f_1,f_2>0$, we can always find a radius $s>0$ suficiently small such that $S_sM_R$ has constant positive scalar curvature or suficiently large such that the same space has constant negative scalar curvature. These are examples of manifolds with constant $\Scal$ but which are not Einstein.

Proceeding with the weighted metric $G=g^{f_1,f_2}$ on $TM$, we define a compatible almost Hermitian
structure $(G,I^G,\omega^G)$, which is a generalization of the canonical or Sasaki almost Hermitian
structure on $TM$. In our case we also allow $\na$ to have torsion. Then the integrability equations
of $I^G$ and $\omega^G$ reserve distinguished roles for the functions $f_1/f_2$ and $f_1f_2$
respectively, both implying the torsion to be of certain so-called vectorial type. In principle
having no relation, notice the similarity of these equations with the two cases (i) and
(ii) above! Finally, the two functions only have to be both constant, the curvature of
$\na$ flat and the torsion zero if and only if we require the defined structure on $TM$ to
be K\"ahler.

We also determine the characteristic classes of the \textit{manifold} $TM$. The Chern classes of
$(TM,I^G)$ are proved to agree with the Pontryagin classes of $M$. Moreover, they do not
depend on the metric connection $\na$. The Stiefel-Whitney characteristic classes of $S_rM$ are also
found. In particular we conclude that any tangent sphere bundle of an oriented manifold is a spin manifold. 

The motivation for the present article is the discovery of a natural $\mathrm{G}_2$-structure
on $S_1M$, for any $M$ oriented of dimension 4, which is having many 
developments and good expectations, cf. \cite{Alb2,Alb2.1}. However, here we just
complete an independent study of the $S_rM$ initiated in \cite{Alb3.0,Alb4}.

Parts of this article were written during a sabbatical leave at Philipps Universit\"at
Marburg. The author wishes to thank the hospitality of the Mathematics Department of
Philipps Universit\"at and specially expresses his gratitude to Ilka Agricola.

%\section*{Riemannian geometry of $S_rM$}

\section{Riemannian geometry of the tangent bundle}

\subsection{The tangent bundle}

Let $M$ be an $m$-dimensional smooth manifold without boundary. Let $\pi:TM\rr
M$ be the tangent bundle so that $\pi(u)=x,\ \forall u\in T_xM,\ x\in M$. Then
$V=\ker\dx\pi$ is known as the vertical bundle tangent to $TM$. There is a
canonical identification $V=\pi^*TM$ and an exact sequence over the
manifold $TM$:
\begin{equation}\label{sequenciaexata}
 0\lrr V\lrr TTM\stackrel{\dx\pi}\lrr\pi^*TM\lrr0 .
\end{equation}
The tangent bundle $TM$ is endowed with a natural vertical vector field, denoted
$\xi$, which is succinctly defined by $\xi_u=u$. 

Let $\na$ be a connection on $M$. Then there is a complement for $V$
\begin{equation}
 H=\{X\in TTM:\ \pi^*\na_X\xi=0\}.
\end{equation}
Indeed $H$ is $m$-dimensional and $\pi^*\na_\cdot\xi$ is the vertical projection onto $V$. For any vector field $X$ over $TM$ we may always find the unique decomposition ($\na^*$ denotes the pull-back connection)
\begin{equation}
 X=X^h+X^v=X^h+\na^*_X\xi.
\end{equation}
Now, $\dx\pi$ induces a vector bundle isomorphism between $H$ and $\pi^*TM$, by (\ref{sequenciaexata}), and we have $V=\pi^*TM$. Hence we may define an endomorphism 
\begin{equation}\label{aplicacaotheta}
 B:TTM\lrr TTM
\end{equation}
sending $X^h$ to the respective $B X^h\in V$ and sending $V$ to 0. We also define an
endomorphism, \textit{denoted} $B^{\mathrm{ad}}$, which gives $B^{\mathrm{ad}}X^v\in
H$ and which
annihilates $H$. In particular $B^{\mathrm{ad}}B X^h=X^h$ and $B^2=0$. Sometimes
we call $B X^h$ the mirror image of $X^h$ in $V$. The map $B$ appears also
in \cite{Alb2}. We endow $TTM$ with the direct sum connection $\na^*\oplus\na^*$,
which we sometimes denote by $\na^*$. We have in particular that
$\na^*B=\na^*B^{\mathrm{ad}}=0$.

Notice the canonical section $\xi$ can be mirrored by $B^{\mathrm{ad}}$ to give a
horizontal
canonical vector field $B^{\mathrm{ad}}\xi$. In the torsion free case, the latter is
known
as the spray of the connection, cf. \cite{Dom,Sakai}, or the geodesic field, cf.
\cite{Geig}. It has the further property that $\dx\pi_u(B^{\mathrm{ad}}\xi)=u,\
\forall u\in
TM$. Away from the zero section, we have a line bundle $\R\xi\subset V$ and therefore
a line sub-bundle too of $H$.

\subsection{Natural metrics} 

Suppose the previous manifold $M$ is furnished with a Riemannian metric $g$ and a
linear connection. We also use $\langle\ ,\ \rangle$ in place of the symmetric tensor
$g$; this same remark on notation is valid for the pull-back metric on $\pi^*TM$. We
recall from \cite{Dom,Sasa} the now called Sasaki metric in $TTM=H\oplus V$: it is
given by $g^S=\pi^*g\oplus\pi^*g$ (originally, with the Levi-Civita connection). With
$g^S$, the map $B_|:H\rr V$ is an isometric morphism and $B^{\mathrm{ad}}$
corresponds with
the adjoint of $B$. We stress that $\langle\ ,\ \rangle$ on $TTM$ always refers
to the Sasaki metric.

Let $\varphi_1,\varphi_2$ be any given functions on $M$ and let
\begin{equation}
G=g^{f_1,f_2}=f_1\pi^*g\,\oplus\,f_2\pi^*g
\end{equation}
with
\begin{equation}
 f_1=\expo^{2\varphi_1},\ \ \ f_2=\expo^{2\varphi_2}.
\end{equation}
Obviously, we convention all these functions to be composed with $\pi$ on the right hand side when used on the manifold $TM$.

\begin{Rema}
With the canonical vector field $\xi$ we may produce other symmetric bilinear forms
over $TM$: first the 1-forms $\eta=\xi^\flat$ and
$\theta=\xi^\flat\circ B=(B^{\mathrm{ad}}\xi)^\flat$
and then the three symmetric products of these. Actually one may see that $\theta$
does not depend on a chosen connection which is metric; cf. last remark in section
\ref{comandsympstru}. The classification of all $g$-induced natural metrics on $TM$
may be found e.g. in \cite{Abb1,AbbCalva}.
\end{Rema}

\subsection{Metric connections}

Let us assume from now on that the connection on $M$ is metric, which implies
$\na^*g^S=0$. It
is well-known that $\na^{f_1}=\na+C_1$, with
\begin{equation}\label{TransfLeviCivitaundercg}
C_1(X,Y)=X(\varphi_1)Y+Y(\varphi_1)X-\langle X,Y\rangle\grad\varphi_1,
\end{equation}
is a metric connection for $f_1g$ on $M$, with the same torsion as $\na$ since $C$ is
symmetric.

For any function $\varphi$, recall the usual identities
$X(\varphi)=\dx\varphi(X)=\langle\grad\varphi,X\rangle$, adopted throughout. On $TM$
we shall use the functions $\partial\varphi(u)=\dx\varphi_{\pi(u)}(u),\ \forall u$.
In other words,
\begin{equation}
 \partial\varphi=\langle B\pi^*\grad\varphi,\xi\rangle
\end{equation}
where $B$ is the mirror map (\ref{aplicacaotheta}). And we agree on lifting gradient
vector fields only to $H$.

We have that $\na^{*,f_1}=\na^*+\pi^*C_1$ makes $f_1\pi^*g$ parallel on $H$. On the
vertical side, $\na^{*,f_2}$, defined by
\begin{equation}
\na^{*,f_2}_XY=\na_X^*Y+B\pi^*C_2(X,B^{\mathrm{ad}}Y)
\end{equation}
$\forall X,Y$ vector fields on $TM$, makes $f_2\pi^*g$ parallel. Henceforth,
the connection $\na^{*,f_1}\oplus\na^{*,f_2}$ is metric for $G=g^{f_1,f_2}$.
\begin{prop}\label{torsaodenablaoplusnabla}
i)\, The torsion of $\na^*\oplus\na^*$ is $\pi^*T^\na+\calR$.\\
ii)\, The connection $\na^{*,f_2,'}_XY=\na^*_XY+X(\varphi_2)Y$ is metric on
$(V,f_2\pi^*g)$.
\end{prop}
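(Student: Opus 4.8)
The plan is to treat the two parts separately, since each is essentially a computation with the pull-back formalism already set up in the excerpt.

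For part (i), I would compute the torsion of $\na^*\oplus\na^*$ directly from its definition on decomposed vector fields. Writing $X=X^h+X^v$, $Y=Y^h+Y^v$ and using that $\na^*\oplus\na^*$ restricted to $H$ and to $V$ are the respective pull-back connections, the torsion contributes the pull-back term $\pi^*T^\na$ from the horizontal-horizontal part. The remaining piece comes from the failure of $[X^h,Y^h]$ to be purely horizontal: its vertical component is exactly the curvature term, since $\pi^*\na_{[X^h,Y^h]}\xi$ is governed by the curvature $R^\na$ evaluated on $\dx\pi X,\dx\pi Y$ and applied to $\xi$. I expect the mixed horizontal-vertical brackets to contribute nothing new because $\na^*\theta=\na^*\theta^t=0$, so one only needs to track where $[\ ,\ ]$ fails to respect the splitting $H\oplus V$. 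Packaging the curvature contribution as the tensor $\calR={\cal R}^\xi$ (which the notation in the preamble, $\newcommand{\calR}{{{\cal R}^\xi}}$, anticipates) gives the claimed formula $\pi^*T^\na+\calR$. The main subtlety here is bookkeeping: being careful that the "$\theta$-mirrored" vertical curvature term is written in the form the paper wants, and checking the Bianchi-type symmetry so it genuinely is a torsion tensor (antisymmetric).

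For part (ii), I would simply verify directly that $\na^{*,f_2,'}$ preserves $f_2\pi^*g$ on $V$. Since $\na^*$ is metric for $\pi^*g$ on $V$ (as $\na^*g^S=0$), for vertical $Y,Z$ one has $X\langle Y,Z\rangle=\langle\na^*_XY,Z\rangle+\langle Y,\na^*_XZ\rangle$. Then $X(f_2\langle Y,Z\rangle)=X(f_2)\langle Y,Z\rangle+f_2X\langle Y,Z\rangle$, and $X(f_2)=2X(\varphi_2)f_2$ because $f_2=\expo^{2\varphi_2}$. Matching this against $\langle\na^{*,f_2,'}_XY,Z\rangle_{f_2}+\langle Y,\na^{*,f_2,'}_XZ\rangle_{f_2}$ with the correction term $X(\varphi_2)Y$ shows the two extra terms $X(\varphi_2)f_2\langle Y,Z\rangle$ reproduce exactly $X(f_2)\langle Y,Z\rangle$. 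This is a one-line check; the only thing to note is that, unlike $\na^{*,f_2}$ which uses the full Cartan-type correction $\theta\pi^*C_2(X,\theta^t\,\cdot\,)$ transported through $\theta$, this alternative connection $\na^{*,f_2,'}$ is cheaper but has different torsion — a remark worth making but not needed for the metricity claim.

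I do not anticipate a serious obstacle; the only genuinely delicate point is part (i), where one must confirm that no horizontal-vertical cross terms survive and that the vertical curvature term is exactly $\calR$ as the paper intends to define it. If the paper has not yet pinned down $\calR$ precisely, I would phrase (i) as defining $\calR$ to be that vertical curvature correction and then noting the standard identity $\calR(X,Y)=-\theta\,\pi^*\bigl(R^\na(\dx\pi X,\dx\pi Y)\xi\bigr)$ (up to sign conventions), deferring the detailed index computation to the pull-back connection machinery already in place.
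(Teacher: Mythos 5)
Your proposal is correct and follows essentially the same route as the paper, which simply declares the proof ``immediate'' and records only the definition $\calR(X,Y)=\pi^*R^\na(X,Y)\xi$ --- exactly the vertical curvature contribution you identify from the bracket $[X^h,Y^h]$, while your part (ii) is the same one-line metricity check using $X(f_2)=2X(\varphi_2)f_2$. One small caution: since $R^\na(\dx\pi X,\dx\pi Y)\xi$ already lies in $\pi^*TM=V$, the mirror map $\theta$ should not appear in front of it in your closing formula ($\theta$ annihilates $V$); apart from that notational slip, which you flag yourself, your bookkeeping is precisely what the paper leaves to the reader.
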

The proof of this result is immediate. The vertical part in i) is defined via the curvature,
$\calR(X,Y)=\pi^*R^\na(X,Y)\xi$. We remark it is $\na^{*,f_1}$ and the connection in
ii) which enter in the Levi-Civita connection $\na^G$ of $G$. Formulas for the curvature
are well-known, cf. \cite{AbbCalva,Alb4,Dom,KowSek2}.

\subsection{Homotheties of $TM$}
\label{HomoofTM}

Suppose we have a conformal change of the metric $g$ on the base $M$. With
$\lambda=\expo^{2\varphi}$ and $\varphi\in \cinf{M}$ we pass to the metric
\begin{equation}
 g'=\lambda g=\lambda\langle\ ,\ \rangle.
\end{equation}
Let us distinguish by $T'M$ the tangent manifold of $M$ with the metric $g'$, when
necessary. For the rest of the section we restrict to the Levi-Civita connection
\begin{equation}
 \na=\nag.
\end{equation}
Notice $TTM=H\oplus V=H'\oplus V$ and we conform to our previous remarks on notation.

Let also $t:M\rr\R\backslash\{0\}$ be a smooth function. Then we may consider the
isomorphism (letting $\hat{h}=\expo^{-\varphi}t$)
\begin{equation}\label{trasconfgtwoistor}
 h:TM\lrr T'M,\qquad h(u)=\expo^{-\varphi}tu=\hat{h}u=:u' \ .
\end{equation}
We treat all given scalar functions like $\varphi$ or $t$, depending on the context,
as functions composed with $\pi$. This implies, for example,
\begin{equation}\label{derivadas}
 X(\varphi)=\dx\varphi(X)=X^h(\varphi)\ .
\end{equation}
Recall the 1-form $\theta$ on $TM$ given by $\theta(X)=\langle B X,\xi\rangle$.
\begin{prop}\label{derideh}
Let $X$ be any vector field on $TM$ and consider the differential map $h_*:TTM\rr
h^*TT'M$. It satisfies the identities $h_*(X^{v})=\hat{h}X^v$ and, more generally, 
\begin{equation}\label{derideh1}
h_*X=X^{h'}+\hat{h}\bigl(\frac{X(t)}{t}\xi+X^v+\partial\varphi.B X 
-\theta(X)B\grad\varphi\bigr)
\end{equation}
where $B$ refers to the decomposition $H\oplus V$.
\end{prop}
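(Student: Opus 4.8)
The plan is to compute the two components of $h_*X$ relative to the decomposition $TT'M=H'\oplus V$ directly, using the intrinsic description of the vertical projection on a tangent manifold together with naturality of pull-back connections; one could equally well argue with curves, as indicated at the end.

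We first dispose of $h_*(X^v)=\hat hX^v$. Since $h$ covers $\mathrm{id}_M$ and restricts on each fibre $T_xM\to T'_xM$ to the linear map $v\mapsto\hat h(x)v$, its differential along the vertical bundle $V=\pi^*TM$ is again multiplication by the scalar $\hat h$. Equivalently, specialising (\ref{derideh1}) to a purely vertical $X$ gives $\dx\pi(X)=0$, hence $X^{h'}=0$, $\theta X=0$, $\mu(X)=0$ and $X(t)=X^h(t)=0$, leaving only $\hat hX^v$; so it suffices to prove the general formula.

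On $T'M$ the vertical projection associated with $TT'M=H'\oplus V$ is $Y\mapsto\bigl((\pi')^*\na^{g'}\bigr)_Y\xi'$, where $\xi'$ is the canonical vertical field of $T'M$ and $\na^{g'}$ the Levi-Civita connection of $g'=\lambda g$. Since $\pi'\circ h=\pi$, we have $h^*\xi'=\hat h\,\xi$ as sections of $V=\pi^*TM$ over $TM$, and $h^*\bigl((\pi')^*\na^{g'}\bigr)=\pi^*\na^{g'}=\na^*+\pi^*C$, where $\na^*=\pi^*\nag$ and $C$ is the difference tensor of (\ref{TransfLeviCivitaundercg}) with $\varphi_1=\varphi$. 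By naturality of the pull-back connection the $V$-component of $h_*X$ is
\[
\bigl(\pi^*\na^{g'}\bigr)_X(\hat h\,\xi)=X(\hat h)\,\xi+\hat h\,\na^*_X\xi+\hat h\,\pi^*C\bigl(\dx\pi(X),\xi\bigr).
\]
Here $\na^*_X\xi=X^v$, and expanding $C$ pointwise at $u$ — using the identifications under $V=\pi^*TM$ of $\dx\pi_u(X)$ with $\theta X$, of $u$ with $\xi$ and of $\grad\varphi$ with $\theta\grad\varphi$, together with $u(\varphi)=\partial\varphi$ and $\langle\dx\pi(X),u\rangle=\langle\theta X,\xi\rangle=\mu(X)$ — yields $\pi^*C(\dx\pi(X),\xi)=X(\varphi)\,\xi+\partial\varphi\,\theta X-\mu(X)\,\theta\grad\varphi$. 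Finally $X(\hat h)=X(\expo^{-\varphi}t)=\hat h\bigl(\frac{X(t)}{t}-X(\varphi)\bigr)$, so the two $X(\varphi)\,\xi$ terms cancel and the $V$-component equals $\hat h\bigl(\frac{X(t)}{t}\xi+X^v+\partial\varphi\,\theta X-\mu(X)\theta\grad\varphi\bigr)$. Since $\dx\pi'(h_*X)=\dx\pi(X)$, the $H'$-component of $h_*X$ is by definition $X^{h'}$, and (\ref{derideh1}) follows.

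The only real subtlety is the bookkeeping: keeping the two horizontal distributions $H$ and $H'$ apart while exploiting that they share the vertical complement $V$, and carrying $C$ correctly through $V=\pi^*TM$, the mirror map $\theta$, and the conventions $\partial\varphi(u)=\dx\varphi_{\pi(u)}(u)$ and $\mu(X)=\langle\theta X,\xi\rangle$. An equivalent alternative represents $X$ by a curve $s\mapsto W(s)$, a vector field along a curve $c(s)$ in $M$ with $\dot c(0)=\dx\pi(X)$ and $\frac{\nag W}{ds}(0)$ its vertical part, applies $h$ to obtain the curve $s\mapsto\hat h(c(s))W(s)$ in $T'M$, and differentiates using the Leibniz rule and $\na^{g'}=\nag+C$; the identical cancellation of the $X(\varphi)\,\xi$ terms then gives (\ref{derideh1}).
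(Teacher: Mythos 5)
Your proposal is correct and follows essentially the same route as the paper: both compute the $V$-component of $h_*X$ as $\pi^*\na'_{h_*X}\xi'$ with $\na'=\na+C$ applied to $h^*\xi'=\hat h\,\xi$, expand $C$ through the identification $V=\pi^*TM$, and observe the cancellation of the $X(\varphi)\xi$ terms coming from $X(\hat h)$, while the $H'$-component is $X^{h'}$ because $\pi\circ h=\pi$. The bookkeeping with $\theta$, $\partial\varphi$ and $\mu$ matches the paper's computation exactly.
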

\begin{proof}
We know that $\na'=\na+C$ where $C_XY=\dx\varphi(X)Y+\dx\varphi(Y)X-\langle
X,Y\rangle\grad\varphi$ (here $X,Y$ denote vector fields on $M$ or on $TM$).
Since $\pi\circ h=\pi$, then $(h_*X)^{h'}=\inv{(\dx \pi)}(\dx \pi(X))$ and this is
the same as $X^{h'}$, the $H'$-part of $X$. Writing $\xi'$ for the very same
canonical vector field $\xi$ on $T'M$, so that $h^*\xi'=\xi\circ h=\hat{h}\xi$, and
computing, 
\begin{eqnarray*}
\pi^*\na'_{h_*(X)}\xi' &=& h^*\pi^*(\na+C)_Xh^*\xi'\\
&=& \pi^*\na_X(\hat{h}\xi)+B\pi^*C(X,B^{\mathrm{ad}}(\hat{h}\xi)) \\
&=& \dx\hat{h}(X)\xi+\hat{h}\na^*_X\xi+\hat{h}B\pi^*C(X,B^{\mathrm{ad}}\xi)  \\
&=& -X(\varphi)\hat{h}\xi+\expo^{-\varphi}X(t)\xi+\hat{h}X^v+\hat{h}X(\varphi)\xi+\\
& &\qquad\quad\qquad
+\hat{h}(B^{\mathrm{ad}}\xi)(\varphi).B X-\hat{h}\langle B
X,\xi\rangle B\grad\varphi\\
&=& \hat{h}\bigl(\frac{X(t)}{t}\xi+X^v+\partial\varphi.B X 
-\theta(X)B\grad\varphi\bigr)
\end{eqnarray*}
we find the vertical part.
\end{proof}
\begin{Rema}
Notice any tangent vector $X=X^h+X^v=X^{h'}+X^{v'}$ has two decompositions. We have,
cf. figure 1,
\begin{equation}
 \begin{split}
 X^{v'}\ =\ \na'^*_X\xi\ =\ \na_X\xi+B\pi^*C(X,B^{\mathrm{ad}}\xi) \hspace{3.1cm}\\
=\ X^v+\partial\varphi.B X+X(\varphi)\xi-\theta(X)B\grad\varphi,\\
X^{h'}\ =\ X-X^{v'} \ =\ X^h-\partial\varphi.B
X-X(\varphi)\xi+\theta(X)B\grad\varphi\ .
 \end{split}
\end{equation}
\begin{figure}\label{figuraprojs}
 \begin{center}
\includegraphics{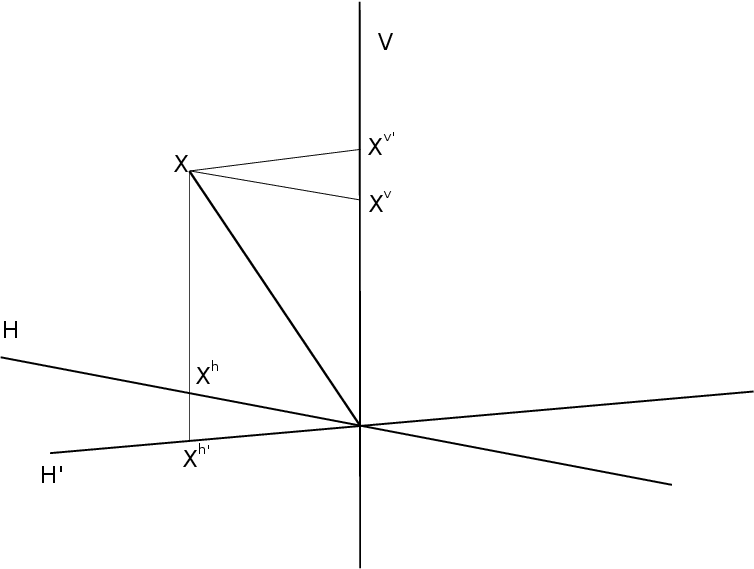}
\caption{The connection induced projections}
\end{center}
\end{figure}
\end{Rema}

Now we suppose $TM$ is endowed with the metric $G=g^{f_1,f_2}$ introduced in previous
sections and we let $T'M$ have the metric $G'=(\lambda g)^{f'_1,f'_2}$ (the four
weight functions are just smooth, positive and defined on $M$).
\begin{teo}\label{homotetiaemTM}
The map $h$ is a homothety (ie. $h^*G'=\psi G$ for some function $\psi$) if and only
if $t$ and $\lambda$ are constants and satisfy
$\frac{f'_1}{f_1}\lambda=t^2\frac{f'_2}{f_2}$. In this case, the latter is the value
of $\psi$.
\end{teo}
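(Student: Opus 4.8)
The plan is to pull back the metric $G'$ along $h$ using the explicit formula for $h_*$ from Proposition \ref{derideh}, split everything into horizontal, vertical and the distinguished lines $\R\xi$ and $\R\theta^t\xi$, and then force the resulting quadratic form to be a scalar multiple of $G=f_1\pi^*g\oplus f_2\pi^*g$.

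First I would compute $h^*G'(X,X)$ for an arbitrary $X=X^h+X^v\in TTM$. Since $G'=(\lambda g)^{f'_1,f'_2}=f'_1\lambda\,\pi^*g\oplus f'_2\lambda\,\pi^*g$ on $H'\oplus V$, I need the $H'$- and $V$-parts of $h_*X$. By Proposition \ref{derideh} the $H'$-part is $X^{h'}$, and because $\dx\pi$ restricts to an isometry $H\to\pi^*TM$ (resp. $H'\to\pi^*TM$) for the Sasaki metric, $\pi^*g(X^{h'},X^{h'})=\pi^*g(X^h,X^h)$ — the horizontal contribution is clean. The $V$-part of $h_*X$ is $\hat h\bigl(\frac{X(t)}{t}\xi+X^v+\partial\varphi\,\theta X-\mu(X)\theta\grad\varphi\bigr)$ with $\hat h=\expo^{-\varphi}t$. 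Expanding $\pi^*g$ of this vector with itself produces, besides $|X^v|^2$, cross terms and square terms in $X(t)/t$, $\partial\varphi$, $\mu(X)$ and $\langle X^v,\xi\rangle=\langle X^v,\theta X^h\rangle$-type quantities; note $\theta X=\theta X^h$ lives in $V$ and $|\theta X^h|=|X^h|$, while $\mu(X)=\langle\theta X^h,\xi\rangle$. So

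\begin{equation}\label{pbformula}
 h^*G'(X,X)=f'_1\lambda\,|X^h|^2+f'_2\lambda\,\hat h^2\Bigl|\tfrac{X(t)}{t}\xi+X^v+\partial\varphi\,\theta X^h-\mu(X)\theta\grad\varphi\Bigr|^2 .
\end{equation}

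Next I would demand \eqref{pbformula} equal $\psi\,(f_1|X^h|^2+f_2|X^v|^2)$ for all $X$ and all points. Comparing the coefficient of $|X^v|^2$ (take $X$ purely vertical and orthogonal to $\xi$) gives $\psi=f'_2\lambda\hat h^2/f_2=f'_2 t^2/f_2$ (the $\lambda$ cancels against $\expo^{-2\varphi}$ in $\hat h^2$). Comparing the coefficient of $|X^h|^2$ for $X$ purely horizontal and $g$-orthogonal to both $\grad\varphi$ and $\xi$ — so that $\partial\varphi$, $\mu(X)$ and $X(t)$ are not forced to vanish but the vertical cross-terms decouple pointwise — yields $\psi f_1=f'_1\lambda+f'_2\lambda\hat h^2(\partial\varphi)^2/|X^h|^2\cdot(\text{something})$; organizing this carefully shows the leading relation $\psi f_1=f'_1\lambda$, i.e. $\frac{f'_1}{f_1}\lambda=t^2\frac{f'_2}{f_2}$, together with a remainder that must vanish identically. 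That remainder involves $\partial\varphi$ and $X(t)$ evaluated on horizontal vectors; since $\partial\varphi(u)=\dx\varphi_{\pi(u)}(u)$ is a genuinely non-tensorial (fibre-linear, unbounded) function of $u$, the only way the $X$-dependent and $u$-dependent junk can cancel against a fixed scalar $\psi$ is $\dx\varphi=0$ and $\dx t=0$, i.e. $\lambda$ and $t$ constant. Concretely: evaluate at fixed $x$ with $|X^h|=1$ and let the base point of the fibre $u$ run to infinity along $\grad\varphi$; the term $f'_2\lambda\hat h^2(\partial\varphi)^2$ blows up unless $\dx\varphi_x=0$, and similarly the $X(t)$-terms force $\dx t_x=0$. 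Once $\varphi,t$ are constant, $\hat h$ is constant, \eqref{pbformula} collapses to $f'_1\lambda|X^h|^2+f'_2\lambda t^2\expo^{-2\varphi}|X^v|^2$, and homothety is exactly $\frac{f'_1}{f_1}\lambda=t^2\frac{f'_2}{f_2}(=\psi)$.

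The main obstacle is the bookkeeping in the middle step: the vertical part of $h_*X$ is a sum of four terms, so its squared norm has ten pieces, and one must choose test vectors $X$ (purely vertical; horizontal $\perp\grad\varphi$; horizontal $=\grad\varphi$; combinations at varying fibre points $u$) cleverly so that each structural identity falls out in isolation rather than as a tangled system. The conceptual heart, though, is recognizing that $\partial\varphi$ and $X(t)$ are unbounded on the fibres, so their presence in a putative conformal factor is impossible unless $\dx\varphi=\dx t=0$ — that is what pins down constancy, after which the stated algebraic relation on the weights is immediate.
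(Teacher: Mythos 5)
Your proposal is correct and follows essentially the same route as the paper: pull back $G'$ via the formula of Proposition \ref{derideh}, read off $\psi f_2=t^2f'_2$ from vertical vectors, then exploit the fact that the vertical correction $E(X^h)$ is linear in the fibre variable $u$ (the paper phrases this as $E_{au}(X^h)=aE_u(X^h)$, you phrase it as unboundedness along the fibre) to force $E(X^h)=0$ and hence $\dx t=\dx\varphi=0$, after which the horizontal comparison gives $f'_1\lambda=\psi f_1$. The paper extracts $\partial t=0$ and $\partial\varphi=0$ by testing $E$ against $\xi$ and $\theta X$ rather than by a limiting argument, but this is only a cosmetic difference.
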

\begin{proof}
We write $h_*X=X^{h'}+\hat{h}E(X)$ defining $E$ from (\ref{derideh1}). Then solving
the equation above with vertical vector fields $X_1,X_2$ we immediately find
\[ h^*G'(X_1,X_2)= \psi G(X_1,X_2)\ \,\mbox{if and only if}\,\
\lambda\hat{h}^2f'_2=\psi f_2\ \mbox{i.e.}\ t^2f'_2=\psi f_2 .\]
In particular, $\psi$ is only defined on $M$. Notice we may write
\[  E_{au}(X^h)=aE_u(X^h),\qquad \forall a\in\R,    \]
because $\xi$ is also hidden linearly in $\partial \varphi$ and $\theta$. Picking two
horizontal lifts and having in mind that $t$ and $\psi$ are only defined on $M$, it
is then easy to deduce that a necessary condition for $h$ to be a homothety is that
$E(X^h)=0$ for all $H$-horizontal $X$. Now
\[ t\langle E(B^{\mathrm{ad}}\xi),\xi\rangle=t\langle \frac{\partial
t}{t}\xi+\partial\varphi.\xi-\|\xi\|^2B\grad\varphi ,\xi\rangle=(\partial
t+t\partial\varphi-t\partial\varphi)\|\xi\|^2 \]
and hence $\partial t=\dx t(B^{\mathrm{ad}}\xi)=0$. Choosing any $X$ horizontal and
orthogonal to $B^{\mathrm{ad}}\xi$ (recall $m>1$), we find 
$0=\langle E(X),B X\rangle=\partial\varphi\|X\|^2=0\Leftrightarrow\partial\varphi=0$,
as we wished. In particular, $\na=\na'$. Finally, solving the equation above for
horizontal vector fields $X_1,X_2$ we get $f'_1\lambda=\psi f_1$. For generic vectors
the result follows.
\end{proof}
Generalizing the Theorem for the case of two conformal changes we have: the map $h$
from $(\lambda_1g)^{f_1,f_2}$ to $(\lambda_2g)^{f'_1,f'_2}$ is a homothety if and
only if
\begin{equation}
t,\frac{\lambda_2}{\lambda_1}\:\ \mbox{are constants}\quad\mbox{and}\quad
 \frac{f'_1\lambda_2}{f_1\lambda_1}=t^2\frac{f'_2}{f_2}.
\end{equation}

\subsection{Homotheties of $S_rM$}

Let $r,s\in\cinf{M}(\R^+)$ and recall the tangent sphere bundle of radius $r$
\begin{equation}
 S_rM=\{u\in TM:\ \|u\|_g^2=r^2\}
\end{equation}
submanifold of $TM$, for which we have
\begin{equation}
 S_rM\,=\,S'_1M
\end{equation}
using the metric $\lambda g$ to define $S'_sM$ with
$\lambda=r^{-2}=\expo^{2\varphi}$. Consider the smooth function $N=r^{-2}\|\xi\|^2$ on $TM$, cf. formula (\ref{derivadas}). Then $S_rM=\inv{N}(1)=\{u\in TM:\ G(\xi_u,\xi_u)=1\}$
where $G=g^{f_1,r^{-2}}$ with $f_1$ any positive function. Using Proposition
\ref{torsaodenablaoplusnabla} to differentiate $N=G(\xi,\xi)$, it is easy to deduce
\begin{equation}\label{tangentesaSrM}
 TS_rM=\bigl\{X\in TTM:\ \langle X,\xi\rangle=rX(r)\bigr\}.
\end{equation}
We have to assume $\varphi_2=\varphi=-\log r$. But of course one just applies $\na^*$
to $\|\xi\|^2-r^2=0$ to easily find the same information. Notice $X\in
TS_rM\Leftrightarrow \langle X^v,\xi\rangle=rX^h(r)$.

We shall consider a more general setting: with $r$ and $\varphi$ independent.

Let $\lambda g$ be any conformal change of the given metric,
$\lambda=\expo^{2\varphi}$. Let $s$ be another positive function on $M$ and consider
the map $h$ from Proposition \ref{derideh} with an appropriate chosen $t$. It restricts to a \textit{diffeomorphism}
\begin{equation}\label{justadiffeo}
 h:S_rM\lrr S_s'M,\quad\quad h(u)=\expo^{-\varphi}\frac{s}{r}u=\hat{h}u\ .
\end{equation}
When is $h$ a homothety for the induced metrics? For a start, only the metrics $G,G'$
constructed as in section \ref{HomoofTM} are relevant, i.e. those induced from
$\na=\nag$ the Levi-Civita connection.
\begin{Rema}
Recall the metric on the right hand side arises from $H'\oplus V$. Since
$h_*:T_uS_rM\rr T_{\hat{h}u}S_s'M$, it is true that we have
\[ rX(r)\,=\,\langle X,\xi\rangle\ \ \Leftrightarrow\ \  s\,(h_*X)(s)\,=\,\langle
h_*X,\hat{h}\xi\rangle' .\]
Indeed, we may write $h_*X=X^{h'}+\hat{h}E(X)$ where $E(X)$ is given in
(\ref{derideh1}) as
\begin{equation}
 EX=\frac{X(t)}{t}\xi+X^v+\partial\varphi.B X-\theta(X)B\grad\varphi.
\end{equation}
but now with the function 
\begin{equation}
  t=\frac{s}{r}\ .
\end{equation}

Also, on vertical vector fields the metrics agree up to the scale, so we find
\begin{eqnarray*}
 \langle h_*X,\hat{h}\xi\rangle' &=& \hat{h}^2\langle EX,\xi\rangle'\\
&=& \expo^{2\varphi}\expo^{-2\varphi}t\langle X(t)\xi+tX^v+t\partial\varphi.B
X-t\theta(X)B\grad\varphi,\xi\rangle\\
&=& t\bigl(\frac{X(s)r-sX(r)}{r^2}\|\xi\|^2+t\langle X^v,\xi\rangle+t\partial\varphi.\theta(X)-t\theta(X)\partial\varphi\bigr)\\
&=& \frac{s}{r}\bigl(rX(s)-sX(r)+\frac{s}{r}rX(r)\bigr)\\
&=& sX(s)\ =\ s\,(h_*X)(s)
\end{eqnarray*}
since on $S_rM$ we have $\|\xi\|^2=r^2$.
\end{Rema}

In the next Theorem we prove that each tangent sphere bundle $S_rM$ with metric $G$
induced from that of $TM$ is quite unique, independently of any of the metric
transformations above and up to the straightforward coincidences expressed in the
corollaries. The reader may notice the impossibility of adapting the arguments used
for Theorem \ref{homotetiaemTM}. We also remark  we were not able to prove the cases of $\dim M=1,2$.

We let $\lambda=\expo^{2\varphi}$ and $r,s,f_1,f_2,f'_1,f'_2$ be any positive
functions on $M$.

Until the end of this section we assume $M$ is connected and $\dim M\geq3$.
\begin{teo}\label{homotetiaSM}
Let $S_rM$ have the induced metric $G=g^{f_1,f_2}$ and let $S_s'M$ have the induced
metric $G'=(\lambda g)^{f'_1,f'_2}$. Then the following are equivalent:
\begin{meuenumerate}
 \item $h:S_rM\rr S_s'M$ is a homothety, ie.  $h^*G'=\psi G$ for some function
$\psi$.
 \item $\lambda$ is constant, $\psi$ verifies simultaneously
$\psi=\frac{f'_1}{f_1}\lambda=\frac{s^2}{r^2}\frac{f'_2}{f_2}$ and one of the following hold:\\
(i) $s/r$ is constant\\
(ii) $rs$ is constant.
\end{meuenumerate}
For the case of the identity ($\hat{h}=1$), we have that it is a homothety if and
only if $\lambda=s^2/r^2$ is a constant and $\frac{f'_1}{f_1}=\frac{f'_2}{f_2}$.
\end{teo}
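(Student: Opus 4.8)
The plan is to exploit the constraint geometry of the sphere bundles, which is exactly what makes the argument of Theorem \ref{homotetiaemTM} fail: on $S_rM$ the vector $\xi$ is not tangent, so one cannot freely test $h^*G'=\psi G$ on purely vertical lifts of $\xi$. First I would fix a point $x\in M$ and a unit tangent vector, and split $T_uS_rM$ for $u\in S_rM$ into the full horizontal space $H_u$ (which is always tangent to $S_rM$ only after correcting by the radius: recall $X\in TS_rM\Leftrightarrow\langle X^v,\xi\rangle=rX^h(r)$) and the $(m-1)$-dimensional subspace $\xi^\perp\cap V_u$ of vertical vectors orthogonal to $\xi$. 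On the latter subspace the relation $h_*X=\hat hX^v$ from Proposition \ref{derideh} holds exactly, so testing $h^*G'=\psi G$ there gives, as in the Remark preceding the theorem, $\lambda\hat h^2f'_2=\psi f_2$, i.e. $t^2f'_2=\psi f_2$; in particular $\psi$ descends to a function on $M$. This is the easy half and pins down one of the two claimed relations for $\psi$.

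Next I would bring in genuinely horizontal directions. For $X$ horizontal, (\ref{derideh1}) gives $h_*X=X^{h'}+\hat h\bigl(\frac{X(t)}{t}\xi+\partial\varphi\,\theta X-\mu(X)\theta\grad\varphi\bigr)$, and I would compute $h^*G'(X,X)$ and $h^*G'(X,Y)$ for horizontal $X,Y$ lying in $T_uS_rM$, using that $G'$ is the weighted Sasaki metric of $H'\oplus V$ and that $X^{h'}=X^h-\partial\varphi\,\theta X-X(\varphi)\xi+\mu(X)\theta\grad\varphi$ from the Remark after Proposition \ref{derideh}. The resulting identity, after substituting $t=s/r$ and $\lambda=\expo^{2\varphi}$, must equal $\psi\langle X,X\rangle f_1$; the cross terms between the $H'$-part and the vertical correction, together with the tangency constraint $\langle X^v,\xi\rangle=rX^h(r)$, will force algebraic relations among $\dx r$, $\dx s$, $\dx\varphi$ at $x$. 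I expect these to collapse, after taking $X$ perpendicular to all of $\grad r,\grad s,\grad\varphi$ (possible since $m\geq 3$) and then $X$ inside their span, to the system that says $\varphi$ is constant (hence $\lambda$ constant, $\na=\na'$) and that $\frac{\dx s}{s}+\frac{\dx r}{r}$ and $\frac{\dx s}{s}-\frac{\dx r}{r}$ cannot both be nonzero — i.e. either $rs$ or $s/r$ is locally constant. Connectedness then upgrades ``locally constant'' to ``constant'' and, since the two cases are governed by disjoint open conditions unless $\dx r=\dx s=0$, no gluing pathology occurs.

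The main obstacle is the middle step: organizing the horizontal computation so that the quadratic-in-the-correction terms and the linear cross terms separate cleanly, and recognizing that the constraint $\langle X^v,\xi\rangle = rX^h(r)$ is doing essential work — without it one would not see why the dichotomy $rs=\mathrm{const}$ vs. $s/r=\mathrm{const}$ arises rather than a single condition. I would handle this by choosing, at a fixed $u\in S_rM$, an adapted orthonormal frame of $H_u$ in which $\theta^t\xi$, $\theta^t(\theta\grad r)$, $\theta^t(\theta\grad s)$, $\theta^t(\theta\grad\varphi)$ are expressed in terms of a few basis vectors, reducing the problem to a $2\times2$ or $3\times3$ linear-algebra identity in $\|\xi\|^2=r^2$, $\partial r$, $\partial s$, $\partial\varphi$. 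Finally, for the stated special case $\hat h=1$ I would simply substitute $t=\expo^{\varphi}$ (equivalently $s=r\sqrt\lambda$, whence both $rs$ and $s/r$ being constant each force $\lambda$ constant) into the general equivalence, observe that both alternatives (i), (ii) then reduce to the single statement that $\lambda=t^2$ is constant, and read off $\psi=\frac{f'_1}{f_1}=\frac{f'_2}{f_2}$ from the two relations for $\psi$; this requires no new work beyond specialization.
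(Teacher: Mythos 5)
Your opening step is exactly the paper's: testing $h^*G'=\psi G$ on pairs of vertical vectors orthogonal to $\xi$ (which are tangent to $S_rM$) gives $\hat h^2\lambda f'_2=\psi f_2$, i.e.\ $\psi=t^2f'_2/f_2$. But the heart of the theorem is deferred in your plan to ``I expect these to collapse,'' and that is precisely where the work and the subtleties live. Concretely, the following ingredients of the paper's argument are absent. First, the conclusion $\grad\varphi=0$ is \emph{not} obtained from horizontal--horizontal tests: the paper pairs a vertical vector with the vertical correction $E(Y)$ of a horizontal or mixed tangent vector (first $X=\xi^\perp$ vertical against $Y=(\grad r)^\perp$ horizontal to get $\grad r\perp\grad\varphi$; then, at the special point $u_1=\frac{r}{\epsilon}\grad r$, the genuinely mixed tangent vector $X=\grad r+\epsilon\,\theta\grad r$ against $Z=\theta\grad\varphi$ to force $\|\grad\varphi\|=0$). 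Your plan restricts to vertical--vertical and horizontal--horizontal tests, and a purely horizontal vector tangent to $S_rM$ automatically satisfies $X(r)=0$, so such tests are blind along $\grad r$ --- the very direction where the dichotomy between $s/r$ and $rs$ is decided. Second, you do not address how to obtain the other half of the $\psi$-relation, $\lambda f'_1=\psi f_1$: the paper must exhibit a tangent direction on which the correction $E$ vanishes, and this forces a case split between $r$ constant (where all of $H$ is tangent and a separate argument shows $s,\varphi$ constant) and $\grad r\neq0$ somewhere; no such case analysis appears in your plan. Third, your choice of $X$ horizontal and perpendicular to all of $\grad r,\grad s,\grad\varphi$ can fail in dimension $3$ when the three gradients are independent; the paper is careful to impose only two orthogonality conditions at a suitably chosen point $u$.

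Finally, the dichotomy itself does not come out as ``$\frac{\dx s}{s}\pm\frac{\dx r}{r}$ cannot both be nonzero'' by inspection; after $\varphi$ is known to be constant the remaining content of $h^*G'=\psi G$ is the explicit symmetric bilinear identity $r\,X(t)Y(t)+t\,X(t)Y(r)+t\,X(r)Y(t)=0$ on horizontal parts, from which one first deduces $\dx t=a\,\dx r$ and then $(ra^2+2ta)\,\dx r\otimes\dx r=0$, whence $t$ constant or $r\dx s+s\dx r=0$. Your instinct that the tangency constraint $\langle X^v,\xi\rangle=rX^h(r)$ is what produces two alternatives rather than one is correct, and your treatment of the identity case $\hat h=1$ is fine, but as written the proposal is a plausible outline whose decisive middle section is not yet a proof.
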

\begin{proof}
First we notice
\begin{eqnarray*}
 G'(h_*X,h_*Y) &=& f'_1\langle X^{h'},Y^{h'}\rangle'+\hat{h}^2f'_2\langle
EX,EY\rangle'\\
&=& f'_1\lambda\langle X^h,Y^h\rangle+\hat{h}^2\lambda f'_2\langle EX,EY\rangle.
\end{eqnarray*}
Now consider the equation $h^*G'(X,Y)=\psi G(X,Y)$. Choose one vector $X=\xi^\perp$
vertical and orthogonal to $\xi$, and a vector $Y=(\grad r)^\perp$ horizontal and
orthogonal to $\grad r$. Then both $X,Y\in TS_rM$. Indeed, $\langle
X,\xi\rangle=0=rX(r)$ and $\langle Y,\xi\rangle=0=r\langle Y,\grad r\rangle=rY(r)$.
Then for two vertical vector fields, like $X$, we immediately get the necessary
condition $\hat{h}^2\lambda f'_2=\psi f_2\ \Leftrightarrow\
\psi=\frac{s^2}{r^2}\frac{f_2'}{f_2}$. For $X,Y$ we have
$EX=X^v$ and $EY=\frac{Y(t)}{t}\xi+\partial\varphi.B Y-\theta(Y)B\grad\varphi$,
hence
\[ 0=\psi G(X,Y)=G'(h_*X,h_*Y)=f'_2\lambda\hat{h}^2\bigl(\partial\varphi\langle X,BY\rangle
 -\theta(Y) \langle X,B\grad\varphi\rangle\bigr)\ . \]
Now we choose a point $u\in S_rM$ orthogonal to $\grad r$. Then we
may take $X=B\grad r$ and $Y=u\in H$. We have $\langle B Y,X\rangle=0$ and
$\theta(Y)=\langle u,u\rangle=r^2$, so our equation yields $\langle
X,B\grad\varphi\rangle=0$. Equivalently, we must have \,$\grad r\perp\grad\varphi$.

Now suppose $\grad r=0$ on all points of $M$, ie. $r$ is constant. Then $H\subset
TS_rM$. Take any non-vanishing $Z_0\in H$. Then we may further\footnote{This last assumption is not plausible in dimension 2 since we want $Z_0\neq0$, hence the hypothesis on the dimension; although here we may assume that $\grad\varphi$ together with $\grad s$ constitute a basis of $H$ and then try to
solve the system of two quadratic equations and 4 unknowns, in the components of $u$ and
$Z_0$ in that basis, given by $Z_0(t)\xi+t\partial\varphi.B Z_0-t\theta(Z_0)B\grad\varphi =0$,
for that is all we need - here, because ahead the dimension hypothesis is required again.} let $Z_0\in H\cap \{\grad s,\grad\varphi\}^\perp$. In fact, 
in dimension $\geq3$, we may find a point $u$ in each fibre of $S_rM$ such that
$(\partial\varphi)_u=0$ and a vector $Z_0\in H_u$ such that $Z_0(s)=Z_0(\varphi)=0$ and $\theta(Z_0)=0$. Then on the chosen point $u$ we get $E(Z_0)=0$ and so
$h_*Z_0=Z_0^{h'}$. Hence our main equation yields the necessary condition
$f'_1\lambda=\psi f_1$. Going back a little, we then consider any point $u$ and any
$Z_0\in H$ perpendicular to $u$, ie. such that $\xi_u\perp B Z_0$. Then we deduce
\[ G'(h_*Z_0,h_*Z_0)=
f'_1\lambda\|Z_0\|^2+f'_2\lambda\hat{h}^2\bigl(\frac{(Z_0(s))^2}{s^2}r^2+
(\partial\varphi)^2\|Z_0\|^2\bigr)\ =\ \psi f_1\|Z_0\|^2  \]
This immediately implies $Z_0(s)=0$, $\partial\varphi=0$. Since $Z_0$ and $u$ may now be
put in general position, we conclude $s$ and $\varphi$ are constant on $M$, a
connected manifold by assumption, and the Theorem follows.

So now we admit $\grad r\neq0$ at some point $x\in M$. Recall \,$\grad
r\perp\grad\varphi$\, and let $\epsilon=\|\grad r\|$ and $\delta=\|\grad\varphi\|$. 

Hence $u_1=\frac{r}{\epsilon}\grad r\in S_rM$. Notice
$\partial\varphi_{u_1}=\dx\varphi(u_1)=0$. Consider the vector $X_0=\grad r$ and
$X=X_0+\epsilon B X_0$. It is tangent to our sphere bundle at $u_1$ since
\[ \langle X,\xi\rangle=\epsilon\frac{r}{\epsilon}\epsilon^2=r\langle
X_0,X_0\rangle=rX(r).\]
And we have that
\begin{eqnarray*}
h_*X=X^{h'}+\hat{h}EX &=& X^{h'}+\hat{h}\bigl(\frac{X(t)}{t}\xi_{u_1}+\epsilon B
X-\theta(X)B\grad\varphi\bigr) \\   &=&
X^{h'}+\hat{h}\bigl(\frac{X(t)}{t}\frac{r}{\epsilon} +\epsilon\bigr)B X_0
-\hat{h}r\epsilon\,B\grad\varphi.
\end{eqnarray*}
Consider also the tangent vector at $u_1$, $Z=B\grad\varphi$. Then $h_*Z=\hat{h}Z$.
And thus $\psi G(X,Z)=\psi  f_2\epsilon\langle B X_0,Z\rangle=0$; on the other
hand
\begin{eqnarray*}
 h^*G'(X,Z) &=& f'_2\lambda\hat{h}^2\langle \bigl(\frac{X(t)}{t}\frac{r}{\epsilon}+
\epsilon\bigr)B X_0 -r\epsilon\,B\grad\varphi,B\grad\varphi\rangle   \\
&=& -f'_2\lambda\hat{h}^2\epsilon r\delta^2.
\end{eqnarray*}
This implies $\delta=0$, ie. $\varphi$ and hence $\lambda=\expo^{2\varphi}$ are constants.

Therefore the map $h$ verifies $h_*X=X^{h}+\hat{h}(\frac{X(t)}{t}\xi+X^v)$, for any vector field $X\in TS_rM$. Now we consider any horizontal vector $X\in\ker\dx r\cap\ker\dx s$, in particular also tangent to $S_rM$ and orthogonal to $\grad t$ (recall $n\geq2$). Then $X(t)=0$ and, just as we had the result $\hat{h}^2\lambda f'_2=\psi f_2$ using vertical vectors, we have the similar result with horizontal: $\lambda f'_1=\psi f_1$.

Next, we use two generic tangent vectors $X,Y\in TS_rM$. It is easy to see the conformality equation $h^*G'=\psi G$ is finally equivalent to
\[ \langle \frac{X(t)}{t}\xi+X^v,\frac{Y(t)}{t}\xi+Y^v\rangle = \langle X^v,Y^v\rangle,  \]
\[ \frac{X(t)Y(t)}{t^2}r^2+\frac{X(t)rY(r)}{t}+\frac{Y(t)rX(r)}{t} = 0  \]
or 
\[ X(t)Y(t)r+X(t)Y(r)t+X(r)Y(t)t\ =\ 0. \]
Notice this last equation only involves the horizontal part of the vectors, so we assume $X,Y$ as such. Now if we take $X$ orthogonal to $\grad t$, ie. satisfying $X(t)=0$, and take $Y=\grad t$, then we find that $X(r)=0$ or that $X$ is also orthogonal to $\grad r$. Henceforth, $\grad t$ and $\grad r$ are proportional, ie. lie on the same line. In other terms,
\[ \dx t= a\dx r \]
for some function $a$ on $M$. Clearly the equation above may be written as
\[  r\dx t\otimes\dx t+t\dx t\otimes\dx r+t\dx t\otimes\dx r=0 . \]
Hence we have $(ra^2+2ta)\dx r\otimes\dx r=0$. Recalling $r$ is not constant, we either have $t$ constant or $ra+2t=0$. We have both
\[ \dx t=-\frac{2t}{r}\dx r=-\frac{2s}{r^2}\dx r \qquad\quad\mbox{and}\qquad\quad
\dx t=\frac{r\dx s-s\dx r}{r^2} . \]
Hence $-2s\dx r=r\dx s-s\dx r\ \Leftrightarrow\ r\dx s+s\dx r=0$, from which we find $sr=$constant.

Finally all conditions are fulfilled for $h$ to be the expected homothety of ratio $\psi$. The identity map case is trivial.
\end{proof}
Let $g^S=g^{1,1}$ denote the induced Sasaki metric on the tangent sphere bundle and recall we are only considering $\dim M \geq3$.
\begin{coro}
The Riemannian manifold $(S_rM,g^S)$ is homothetic to $(S'_sM,(\lambda g)^S)$ via $h$ if
and only if $\psi=\lambda=\frac{s^2}{r^2}$ and this is a constant. In this case, $h$ is the identity and
$s=\sqrt{\lambda}r$; in other words  $S_rM=S'_sM$. In particular, two tangent sphere
bundles both with the induced Sasaki metric are homothetic if and only if they have
exactly the same radius function, ie., they coincide.
\end{coro}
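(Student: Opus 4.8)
The plan is to read this off from Theorem \ref{homotetiaSM} by specializing to the Sasaki weights $f_1=f_2=f'_1=f'_2=1$. Under that substitution the simultaneous requirement $\psi=\frac{f'_1}{f_1}\lambda=t^2\frac{f'_2}{f_2}$ of condition~2 collapses to $\psi=\lambda=t^2$, and the theorem forces $\lambda$ to be constant; consequently $t^2$, and hence $t=s/r>0$, is constant as well.

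Since $t$ is already constant, alternative~(i) of the theorem holds automatically and there is nothing further to extract from alternative~(ii) --- one may note in passing that (ii) would only add the redundant conclusion that $r$ and $s$ are each constant, since $s/r$ and $rs$ constant together give $s^2$ constant. Next I would compute $\hat h$: from $\lambda=\expo^{2\varphi}=t^2$ with $t>0$ and $\expo^{\varphi}>0$ we get $\expo^{\varphi}=t$, whence $\hat h=\expo^{-\varphi}t=1$, so $h$ is the identity map, and $s=tr=\sqrt{\lambda}\,r$. It then remains only to check that the two sphere bundles coincide inside $TM$: $u\in S'_sM$ means $\|u\|_{\lambda g}^2=s^2$, i.e. $\lambda\|u\|_g^2=s^2$, i.e. $\|u\|_g^2=s^2/\lambda=r^2$, which is exactly $u\in S_rM$. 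Thus $S_rM=S'_sM$ and the identity is trivially a homothety of ratio $\psi=\lambda$, establishing the stated equivalence.

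For the final assertion I would set $\lambda=1$, i.e. no conformal change. Then $\psi=t^2=1$ forces $t=1$, hence $s=r$, so two tangent sphere bundles carrying the induced Sasaki metric are homothetic (via $h$) exactly when $r\equiv s$, in which case they are literally the same submanifold of $TM$. The argument is entirely routine and presents no real obstacle; the only points requiring a moment's care are recognizing that case~(ii) of Theorem \ref{homotetiaSM} contributes nothing new once $\lambda=t^2$ is constant, and then confirming that $\hat h=1$, so that the candidate homothety $h$ in fact degenerates to the identity.
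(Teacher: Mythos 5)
Your proposal is correct and takes the same route the paper intends: the corollary is stated without a separate proof precisely because it is the specialization $f_1=f_2=f'_1=f'_2=1$ of Theorem \ref{homotetiaSM}, which is exactly what you carry out, including the observations that $t^2=\lambda$ constant makes case (i) automatic, that $\hat h=\expo^{-\varphi}t=1$, and that $S_rM=S'_sM$ as subsets of $TM$.
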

\begin{coro}\label{casosparticularesdeisometrias1}
Other particular cases are as follows: the Riemannian manifold $(S_rM,g^{f_1,f_2})$ is
isometric via $h$ to $(S'_rM,(\lambda g)^{1,f_2})$ if $f_1=\lambda$ is constant. And
$(S_rM,g^{f_1,f_2})$ is isometric to $(S_1'M,(\lambda g)^{1,r^2f_2})$ if $f_1=\lambda$ and
both $r,f_1$ are constant. Moreover, $(S_rM,g^S)$ is isometric to $(S_1M,g^{1,r^2})$ if
$r$ is constant.
\end{coro}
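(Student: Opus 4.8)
The plan is to obtain all three assertions as direct specializations of Theorem~\ref{homotetiaSM}. For each one I would identify the data $(r,s,\lambda,f_1,f_2,f'_1,f'_2)$ appearing in the statement, verify that $\lambda$ is constant, check that one of the two alternatives ``(i) $t=s/r$ constant'' or ``(ii) $rs$ constant'' of the second condition of the Theorem holds, and confirm the compatibility relation $\psi=\frac{f'_1}{f_1}\lambda=t^2\frac{f'_2}{f_2}$ is satisfied with common value $\psi=1$. Since a homothety of ratio $\psi=1$ is precisely an isometry, the equivalence in Theorem~\ref{homotetiaSM} then yields the claim.

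Concretely, for the first assertion $s=r$, so $t=1$ is constant and alternative (i) holds; the target weights are $f'_1=1$, $f'_2=f_2$, so with $f_1=\lambda$ constant one gets $\frac{f'_1}{f_1}\lambda=1=t^2\frac{f'_2}{f_2}$, hence $\psi=1$. For the second assertion $s=1$ and $r$ is constant, so $t=1/r$ is constant (alternative (i); in fact alternative (ii) also holds since $rs=r$), and the target weights are $f'_1=1$, $f'_2=r^2f_2$; with $f_1=\lambda$ and $r,f_1$ constant one has $\frac{f'_1}{f_1}\lambda=1=\frac{1}{r^2}\,r^2=t^2\frac{f'_2}{f_2}$, so again $\psi=1$. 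The third assertion is just the case $\lambda=1$, $f_1=f_2=1$ of the second one (equivalently $s=1$, $r$ constant, $t=1/r$, $f'_1=1$, $f'_2=r^2$, giving $\psi=1$), hence it requires nothing new.

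I do not expect a genuine obstacle here: all the difficulty sits inside Theorem~\ref{homotetiaSM}, and the corollary only invokes its easy implication ``second condition $\Rightarrow$ homothety''. The one point to keep straight is the bookkeeping of the map itself --- the restriction of $h$ to sphere bundles forces $t=s/r$, so $t$ is not a free parameter, and in the first assertion $\hat h=\expo^{-\varphi}=\lambda^{-1/2}$ need not equal $1$, so $h$ is a genuine rescaling rather than the identity, yet still an isometry because $\psi=1$.
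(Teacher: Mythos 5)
Your proposal is correct and matches the paper's (implicit) reasoning: the paper states this corollary without proof as a direct specialization of Theorem~\ref{homotetiaSM}, and your case-by-case verification that $\lambda$ is constant, that $t=s/r$ is constant, and that $\psi=\frac{f'_1}{f_1}\lambda=t^2\frac{f'_2}{f_2}=1$ is exactly the intended check. Your closing remark that $\hat h=\lambda^{-1/2}$ need not be the identity even though $\psi=1$ is a worthwhile clarification.
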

We have used the metric $G=g^{f_1,r^{-2}}$ on $S_rM$. So we study this case separately.
\begin{coro}\label{casosparticularesdeisometrias2}
Let $S_rM$ be given the metric $G=g^{1,\frac{1}{r^2}}$ and let $S_s'M$ be with the metric $G'=(\lambda g)^{f'_1,\frac{1}{s^2}}$. Then the following three conditions are equivalent: 
\begin{meuenumerate}
 \item the map $h:S_rM\rr S_s'M$ is a homothety.
 \item the functions verify: $\psi=f'_1\lambda=1$, $\lambda$ is a constant and $s/r$ or $sr$ is a constant.
 \item the map $h$ is an isometry.
\end{meuenumerate}
In particular, for any $s,r$ positive constants,
$(S_rM,g^{1,r^{-2}})\simeq(S_sM,g^{1,s^{-2}})\simeq(S_1M,g^S)$.
\end{coro}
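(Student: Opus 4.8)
The plan is to read everything off Theorem~\ref{homotetiaSM} by feeding in the concrete weights $f_1=1$, $f_2=r^{-2}$, $f'_2=s^{-2}$ (leaving $f'_1$ free) and then noticing that the homothety constant is automatically forced to be $1$.

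First I would invoke Theorem~\ref{homotetiaSM}: condition~1 is equivalent to $\lambda$ being constant, the function $\psi$ satisfying $\psi=\frac{f'_1}{f_1}\lambda=t^2\frac{f'_2}{f_2}$, and one of (i) $t=s/r$ constant or (ii) $rs$ constant holding. Now I substitute the given weights. Because $t$ is by definition $s/r$, the second equality reads
\[ \psi=t^2\frac{f'_2}{f_2}=\frac{s^2}{r^2}\cdot\frac{r^2}{s^2}=1, \]
with no assumption needed on $r,s$; in particular this stays valid in case~(ii), where $t$ itself need not be constant. Hence $\psi\equiv1$, and the remaining equality $\psi=\frac{f'_1}{f_1}\lambda$ becomes $f'_1\lambda=1$. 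This is exactly the list of constraints in condition~2, so 1 $\Leftrightarrow$ 2, the backward implication being the same substitution run in reverse (the hypotheses of~2 are verbatim those of clause~2 of the Theorem with these weights). And 1 $\Leftrightarrow$ 3 is immediate: with $\psi\equiv1$ a homothety is an isometry, and an isometry is trivially a homothety; the dimension hypothesis $\dim M\geq3$ of the section is in force throughout.

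For the concluding ``in particular'' I would set $\lambda=1$ and $f'_1=f'_2=1$, so that $G'=g^S$, and take $r,s$ to be positive constants. Then $\lambda$ is constant, $f'_1\lambda=1$, and both $rs$ and $s/r$ are constant, so condition~2 holds and $h$ (which, since $\varphi=0$, is simply $u\mapsto\frac{s}{r}u$) is an isometry; specialising $s=1$ gives $(S_rM,g^{1,r^{-2}})\simeq(S_1M,g^S)$, and composing two such isometries gives $(S_rM,g^{1,r^{-2}})\simeq(S_sM,g^{1,s^{-2}})$. I do not anticipate a real obstacle: all the work is in Theorem~\ref{homotetiaSM}, and the only thing to observe is the unconditional cancellation $t^2f'_2/f_2=1$ that both collapses the three conditions into one and promotes the homothety to an isometry.
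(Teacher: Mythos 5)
Your proposal is correct and is essentially the paper's own argument, which consists precisely of the observation $\psi=f'_1\lambda=t^2\frac{r^2}{s^2}=1$ after substituting the given weights into Theorem~\ref{homotetiaSM}. The only difference is that you spell out the bookkeeping (including the ``in particular'' chain of isometries) in more detail than the paper's one-line proof.
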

\begin{proof}
Indeed we have $\psi=f'_1\lambda=\frac{s^2}{r^2}\frac{r^2}{s^2}=1$.
\end{proof}
\begin{coro}\label{isometriaengracada}
 Let $r$ be any function on $M$. Then $(S_rM,g^S)$ is isometric to 
$(S_{\frac{1}{r}}M,g^{1,r^4})$.
\end{coro}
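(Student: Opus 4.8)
The plan is to obtain this as a direct instance of Theorem \ref{homotetiaSM}, alternative (ii). First I would make \emph{no} conformal change on the base, setting $\lambda=1$ (hence $\varphi\equiv 0$ and $T'M=TM$, $H'=H$), and choose the target radius function $s=1/r$. Then $rs=1$ is constant, so we are exactly in case (ii); $\lambda$ is trivially constant; and the transition parameter is $t=s/r=1/r^2$, so the map in question is $h(u)=\hat{h}u=u/r^2$. A one-line check confirms $\|h(u)\|_g^2=r^{-4}\|u\|^2=r^{-2}$ for $u\in S_rM$, so indeed $h$ maps $S_rM$ onto $S_{1/r}M$.

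Next I would fix the weight functions to match the two metrics in the statement: on the source we take the Sasaki metric $g^S=g^{1,1}$, so $f_1=f_2=1$; on the target we want $G'=g^{1,r^4}$, so $f'_1=1$ and $f'_2=r^4$. It then remains only to verify the compatibility identity required by the second condition of Theorem \ref{homotetiaSM}, namely $\psi=\frac{f'_1}{f_1}\lambda=t^2\frac{f'_2}{f_2}$: the left-hand side equals $1$ and the right-hand side equals $r^{-4}\cdot r^4=1$, so both are $\psi=1$. The theorem then asserts that $h$ is a homothety of ratio $\psi=1$, i.e. an isometry, which is the claim.

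There is no genuine obstacle here; the only point to keep in mind is that Theorem \ref{homotetiaSM} carries the standing hypotheses that $M$ is connected of dimension $\geq3$, so these remain in force. Should one want an argument free of the dimension restriction, one can instead specialise formula (\ref{derideh1}) to $\varphi\equiv 0$, $t=1/r^2$, $\hat{h}=1/r^2$, which collapses to $h_*X=X^h+\hat{h}\bigl(\frac{X(t)}{t}\xi+X^v\bigr)$ since $\partial\varphi=0$ and $\grad\varphi=0$; noting $X(t)/t=-2X(r)/r$ one computes $h^*G'(X,Y)$ directly for $X,Y\in TS_rM$. Expanding $\langle-\frac{2X(r)}{r}\xi+X^v,\,-\frac{2Y(r)}{r}\xi+Y^v\rangle$ and using $\|\xi\|^2=r^2$ together with the tangency relations $\langle X^v,\xi\rangle=rX(r)$, $\langle Y^v,\xi\rangle=rY(r)$ from (\ref{tangentesaSrM}), the three cross-terms exactly cancel the $\|\xi\|^2$-term, leaving $\langle X^h,Y^h\rangle+\langle X^v,Y^v\rangle=g^S(X,Y)$; hence $\psi=1$ and $h$ is an isometry.
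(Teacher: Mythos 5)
Your main argument is exactly the paper's: take $\lambda=1$, $s=1/r$, $t=1/r^2$, note $rs=1$ is constant (case (ii)), and check $\psi=\frac{f'_1}{f_1}\lambda=1=t^2\frac{f'_2}{f_2}$, so Theorem \ref{homotetiaSM} gives an isometry. Your supplementary direct computation via (\ref{derideh1}) is also correct (with $\hat h^2f'_2=r^{-4}r^4=1$ the $\|\xi\|^2$-term $4X(r)Y(r)$ and the two cross-terms $-2X(r)Y(r)$ each do cancel), and it is a worthwhile observation that this bypasses the standing hypotheses $\dim M\geq3$ and connectedness, which the paper's corollary tacitly inherits from the theorem.
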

\begin{proof}
This is due to the second particular case found in the Theorem. We are taking $\lambda=1$ and $s=\frac{1}{r}$ and indeed $\psi=\frac{f'_1}{f_1}\lambda=1=\frac{r^4}{r^4}=\frac{s^2}{r^2}\frac{f'_2}{f_2}$. Also notice we have $sr$ constant.
\end{proof}

\subsection{Applications to space-forms}

Formulas for the curvature of tangent sphere bundles of space-forms are
finally studied here. Let $R>0$ and let
\begin{equation}
 M_R=\bigl\{x\in\R^{m+1}:\ x_1^2+\cdots+x_m^2\pm x_{m+1}^2=R^2\bigr\}
\end{equation}
be an  $m$-dimensional space-form with the induced metric $g$ from Euclidean space.
$M_R$ has constant sectional curvature $\pm1/R^2$. If we conformally change the
metric $g\rightsquigarrow\lambda g$ by a constant, then clearly
$\pm\frac{1}{R^2}\rightsquigarrow\pm\frac{1}{\lambda R^2}$.

Having another $R_1>0$, the map $F:M_{R_1}\lrr M_{R}$ defined by
$x\in\R^{m+1}\longmapsto F(x)=\frac{R}{R_1}x$ induces the following isometry through
differentiation. Writing $f_1=f_2=\frac{R^2}{R_1^2}$ and $s=\frac{R_1r}{R}$, we have
\begin{equation}
 F_*:(S_sM_{R_1},g^{f_1,f_2})\lrr(S_rM_R,g^S),\qquad F_*(x,u)=\frac{R}{R_1}(x,u).
\end{equation}
Indeed, $(F_*)^*g^S=g^{f_1,f_2}$. This isometry and corollaries
\ref{casosparticularesdeisometrias1},\ref{casosparticularesdeisometrias2} give us the next quite interesting result.
\begin{prop}
We have the following isometries
\begin{equation}\label{isometriadeSespacosformas}
(S_1M_R,g^S)\simeq(S_{\frac{1}{R}}M_1,g^{R^2,R^2})\simeq
(S'_{\frac{1}{R}}M_1,(R^2g)^{1,R^2})\simeq
(S'_1M_1,(R^2g)^{1,1})=(S_{1}M_1,(R^2g)^S).
\end{equation}
\end{prop}

Now we apply a general formula from \cite[Proposition 1.6]{Alb4} on the scalar curvature $\Scal$ of
$(S_sM,g^{f_1,f_2})$ for any given constants $f_1,f_2$:
\begin{equation}
 \mathrm{Scal}_{(S_sM,g^{f_1,f_2})}=\frac{1}{f_1}\mathrm{Scal}_{(M,g)}
-\frac{f_2}{4f_1^2}\sum_{i,j,k=1}^m(\calR_{ijk})^2+\frac{(n-1)n}{f_2s^2}.
\end{equation}
Since $\sum_{i,j,k=1}^m(\calR_{ijk})^2=\frac{s^2}{R^4}2n$, where $n=m-1$, we have
\begin{equation}\label{curvaturaescalardasspaceforms}
 \mathrm{Scal}_{(S_sM_R,g^{f_1,f_2})}=\pm\frac{n(n+1)}{f_1R^2}
-\frac{f_2}{4f_1^2}\frac{s^2}{R^4}2n+\frac{(n-1)n}{f_2s^2}.
\end{equation}
In particular the scalar curvature of \eqref{isometriadeSespacosformas} is
 $\pm\frac{n(n+1)}{R^2}-\frac{n}{2R^4}+(n-1)n$. We may say it is rewarding to see the same value in \eqref{curvaturaescalardasspaceforms} for any of the forms in \eqref{isometriadeSespacosformas}. Notice we can also write the scalar curvature of $(S_rM_R,g^S)\simeq(S_{\frac{1}{r}}M_R,g^{1,r^4})$.
\begin{teo}\label{teoremasobrecurvaturaescalar}
For any $m\geq3$ and both cases $\pm$, for any scalars $R,f_1,f_2>0$, we can always find a sufficiently small or large radius $s$ in order to have $(S_sM_R,g^{f_1,f_2})$ with, respectively, positive or negative scalar curvature.
\end{teo}
The proof is clear just by looking at $s$ in (\ref{curvaturaescalardasspaceforms}); the result is partially corroborated by two Theorems in \cite{KowSek2}.

We further remark that the formulas in \cite{Alb4} show the Riemannian metrics we are considering are never Einstein, though $\mathrm{Scal}_{(S_sM_R,g^{f_1,f_2})}$ is constant.

\section{Characteristic classes}

We know not of any reference for the fundamental questions solved in this section.
We extend our study to problems of topology of the tangent and tangent sphere bundles. The first stems from the Riemannian structure.

\subsection{Almost Hermitian structure on $TM$}
\label{comandsympstru}

The pair $TM,g^S$ admits a compatible almost complex structure, also attributed
to Sasaki. It was first studied in \cite{Dom,Sasa} and gave origin in
\cite{Tash} to an almost contact structure on the unit tangent sphere bundle $S_1M$.
For $M$ oriented and dimension $4$ we discovered a natural $\mathrm{G}_2$-structure always existing
on $S_1M$ with the very same metric, cf. \cite{Alb2,Alb2.1}.

We continue the study of $TM$ with the metric $G=g^{f_1,f_2}$ where
$f_1=\expo^{2\varphi_1}$ and $f_2=\expo^{2\varphi_2}$. We let $\na$ denote a metric
connection on $M$ with torsion $T^\na$. The almost complex structure of Sasaki may be
now written as the bundle endomorphism $I^S=B^{\mathrm{ad}}-B$. 

Let 
\begin{equation}
 \psi=\varphi_2-\varphi_1,\qquad\quad\ \ \overline{\psi}=\varphi_2+\varphi_1.
\end{equation}
We then define 
\begin{equation}
 I^G=\expo^\psi B^{\mathrm{ad}}-\expo^{-\psi}B.
\end{equation}
It is easy to see the endomorphism $I^G$ is an almost complex structure compatible with the metric
$G$. We consider also the associated non-degenerate 2-form $\omega^G$ defined by
$\omega^G(X,Y)=G(I^GX,Y)$, $\forall X,Y\in TTM$. Since
$f_1\expo^\psi=f_2\expo^{-\psi}=\expo^{\overline{\psi}}$, it follows that
$\omega^G=\expo^{\overline{\psi}}\omega^S$ where $\omega^S$ is the 2-form associated to the Sasaki
structure $g^S$ and $I^S=B^{\mathrm{ad}}-B$. 

The next Theorem is shown for completeness of exposition. For the Cartan classification of torsions of metric connections see \cite{Agri}. Notice the presence \textit{again} of the quotient and product of the weights $f_1,f_2$!
\begin{teo}[\cite{Alb3.0}]
i) The almost complex structure $I^G$ is integrable if and only if $\na$ is flat and has the
vectorial type torsion $T^\na=\dx\psi\wedge 1$.
In particular, if $\na$ is torsion free, then $I^G$ is integrable if and only if $M$ is Riemannian
flat and $f_2/f_1=$constant.\\
ii)  $(TM,\omega^G)$ is a symplectic manifold if and only if $T^\na=\dx\overline{\psi}\wedge1$.
In particular, with $\na$ the Levi-Civita, $\dx\omega^G=0$ if and only if $f_2f_1=$constant.
\end{teo}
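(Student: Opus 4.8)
The plan is to work in a local $g$-orthonormal frame $e_1,\dots,e_m$ on $M$, pulled back and lifted to $H$ and mirrored by $\theta$ to $V$, so that $\{e_i^h,\theta e_i^h\}$ is a $g^S$-orthonormal frame of $TTM$, and then express both $I^G$ and $\omega^G$ in this frame and compute $\dx^{\na^*}$ on the relevant objects. The structure equations one needs are: (a) the torsion identity $T^{\na^*\oplus\na^*}=\pi^*T^\na+\calR$ from Proposition \ref{torsaodenablaoplusnabla}, where $\calR(X,Y)=\pi^*R^\na(X,Y)\xi$; (b) $\na^*\theta=\na^*\theta^t=0$; (c) the fact that $\dx\psi$ and $\dx\overline\psi$ are lifted to $H$ only, so that acting on vertical vectors they vanish. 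These three facts isolate where curvature and where torsion enter, and make both statements essentially a bookkeeping computation of $\dx^{\na^*}(\expo^{\pm\psi})$ against the two pieces of the torsion.

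For part (i), the strategy is to use the Newlander--Nirenberg criterion in the form that $I^G$ is integrable iff its Nijenhuis tensor $N_{I^G}$ vanishes, and to compute $N_{I^G}$ on the three types of pairs $(e_i^h,e_j^h)$, $(\theta e_i^h,\theta e_j^h)$, $(e_i^h,\theta e_j^h)$. Writing $I^G=\expo^\psi\theta^t-\expo^{-\psi}\theta$ and using $N_{I^G}(X,Y)=[I^GX,I^GY]-I^G[I^GX,Y]-I^G[X,I^GY]-[X,Y]$ (or equivalently $N_{I^G}=\dx^{\na^*}$-expressed since $\na^*$ is torsion-compatible up to the known correction), the derivatives of $\expo^{\pm\psi}$ produce terms proportional to $\dx\psi$ contracted with the frame, the Lie brackets of horizontal lifts produce the vertical curvature term $\calR$, and the brackets mixing $H$ and $V$ produce $\pi^*T^\na$. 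Collecting, one should find that $N_{I^G}$ splits into a "curvature part" which vanishes iff $R^\na=0$ and a "torsion part" which vanishes iff $T^\na=\dx\psi\wedge\Id$ (i.e. $T^\na(X,Y)=\dx\psi(X)Y-\dx\psi(Y)X$, Cartan's vectorial type with the specific $1$-form $\dx\psi$). The torsion-free specialization is then immediate: $T^\na=0$ forces $\dx\psi=0$, i.e. $f_2/f_1$ constant, and $R^\na=0$ is Riemannian flatness.

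For part (ii), the approach is more direct: since $\omega^G=\expo^{\overline\psi}\omega^S$, one has $\dx\omega^G=\expo^{\overline\psi}(\dx\overline\psi\wedge\omega^S+\dx\omega^S)$, so everything reduces to computing $\dx\omega^S$ in terms of the torsion of $\na^*\oplus\na^*$. Using that $\na^*\oplus\na^*$ is $g^S$-metric with $\na^*\omega^S$ controlled by $\na^*I^S=0$ (since $\theta,\theta^t$ are parallel), the exterior derivative $\dx\omega^S$ equals the alternation of $\na^*\omega^S$ plus torsion terms, and $\na^*\omega^S=0$, so $\dx\omega^S$ is purely the torsion contribution $T^{\na^*\oplus\na^*}=\pi^*T^\na+\calR$ plugged into $\omega^S$. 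One must check the $\calR$-part always alternates to zero in $\dx\omega^S$ (this is the first Bianchi-type identity for $R^\na$, using metricity), leaving $\dx\omega^S$ proportional to the $\pi^*T^\na$ contraction; then $\dx\omega^G=0$ becomes $\dx\overline\psi\wedge\omega^S=-($that torsion term$)$, which unwinds exactly to $T^\na=\dx\overline\psi\wedge\Id$. With $\na$ Levi-Civita, $T^\na=0$ gives $\dx\overline\psi=0$, i.e. $f_1f_2$ constant.

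The main obstacle I expect is the careful handling of the mixed brackets $[e_i^h,\theta e_j^h]$ and the identification of exactly which curvature/torsion components survive the antisymmetrization in $N_{I^G}$ and in $\dx\omega^S$ — in particular showing that the $\calR$-term drops out of $\dx\omega^S$ by the first Bianchi identity of the metric connection $\na$, and that in $N_{I^G}$ the curvature enters in a way that forces the \emph{full} vanishing $R^\na=0$ rather than just a trace or symmetrized part. One should also be slightly careful that $\calR$ is not tensorial in the naive sense along $V$ (it depends on $\xi$), so the vertical derivative $\na^*_{\theta e_j^h}\calR$-type terms must be tracked; however, since $\calR$ is linear in $\xi$ and $\na^*_{\theta X}\xi=\theta^t\theta X=$ the corresponding horizontal vector, these reduce to algebraic curvature expressions and the computation closes.
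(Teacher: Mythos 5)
First, a point of reference: the paper itself gives no proof of this theorem --- it is imported from \cite{Alb3.0} and merely stated ``for completeness of exposition'' --- so there is no in-paper argument to compare yours against; I can only assess your outline on its own terms. Your overall route (adapted frame $\{e_i^h,\theta e_i^h\}$, the torsion identity $T^{\na^*\oplus\na^*}=\pi^*T^\na+\calR$ from Proposition \ref{torsaodenablaoplusnabla}, $\na^*\theta=\na^*\theta^t=0$, the Nijenhuis tensor for (i), and $\omega^G=\expo^{\overline\psi}\omega^S$ for (ii)) is the natural one, and for part (i) it does close: the only nontrivial component is $N_{I^G}(X^h,Y^h)$, whose horizontal part works out to $\bigl(T^\na(X,Y)-\dx\psi(X)Y+\dx\psi(Y)X\bigr)^h$ and whose vertical part is $\pi^*R^\na(X,Y)\xi$; since $\xi$ ranges over all of $T_xM$, this gives exactly the two stated conditions, and the mixed and doubly vertical components yield nothing new.

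The genuine gap is in part (ii), in your claim that ``the $\calR$-part always alternates to zero in $\dx\omega^S$ \dots by the first Bianchi-type identity, using metricity.'' It does not. Since $\omega^S$ vanishes on pairs of horizontal vectors, on a horizontal triple one gets $\dx\omega^S(X^h,Y^h,Z^h)=-\mathfrak{S}_{XYZ}\,\omega^S(\calR(X,Y),Z^h)=\pm\langle \xi,\mathfrak{S}_{XYZ}R^\na(X,Y)Z\rangle$, and for a metric connection \emph{with torsion} the first Bianchi sum is not zero: it equals $\mathfrak{S}_{XYZ}\{(\na_XT^\na)(Y,Z)+T^\na(T^\na(X,Y),Z)\}$. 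So this component of $\dx\omega^G$ is an a priori independent condition beyond the one you extract from the $(h,h,v)$-components (which is where $T^\na=\dx\overline{\psi}\wedge1$ actually comes from, since $\dx\overline\psi\wedge\omega^S$ also vanishes on horizontal triples). The statement is saved because, once $T^\na=\dx\overline{\psi}\wedge1$ is in force, a short computation shows this Bianchi sum vanishes: the antisymmetric part of $\na\dx\overline{\psi}$ equals $-\dx\overline{\psi}\circ T^\na=0$ for this torsion, and the quadratic term cancels cyclically. But you must actually perform that check; as written, your ``if'' direction is incomplete and your ``only if'' direction silently discards a component that could in principle have imposed extra constraints (and would, for a non-vectorial torsion). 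A cleaner handle on part (ii), incidentally, is the identity $\dx\mu=\omega^S+\mu\circ T^\na$ recorded in the paper's remark, which gives $\dx\omega^S=-\dx(\mu\circ T^\na)$ directly. Finally, note that your text is a plan rather than a proof: the decisive computations that actually produce the specific forms $\dx\psi\wedge1$ and $\dx\overline{\psi}\wedge1$ are described but nowhere carried out.
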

We observe that in the strict case of the Sasaki metric we have $T^\na=0$ as
\textit{necessary} condition for both integrability of $I^S$ and $\dx\omega^S=0$. In
the general case, the two equations are distinguished, as they should, by $\psi$ and
$\overline{\psi}$. Clearly we may draw the following conclusion.
\begin{coro}[\cite{Alb3.0}]%\label{coroKahlerTMflat}
The almost Hermitian structure $(TM,G,I^G,\omega^G)$ is K\"ahler if and only if $M$ is a Riemannian flat manifold ($T^\na=0,\ R^\na=0$) and $f_1,f_2$ are constants. In this case, $TM$ is flat.
\end{coro}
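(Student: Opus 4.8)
The plan is to deduce the Kähler corollary directly from the preceding Theorem by combining its two parts together with the classification of vectorial-type torsions. Recall that an almost Hermitian structure is Kähler precisely when it is simultaneously integrable (as an almost complex structure) and symplectic (the fundamental $2$-form is closed); indeed $\na^G$-parallelism of $I^G$ is equivalent to the conjunction $N_{I^G}=0$ and $\dx\omega^G=0$ for a compatible pair. So the strategy is: impose both conclusions of the Theorem at once and see what survives.

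First I would invoke part (i): $(TM,I^G)$ is a complex manifold if and only if $\na$ is flat, $R^\na=0$, and the torsion is of the special vectorial form $T^\na=\dx\psi\wedge\Id$. Then I would invoke part (ii): $(TM,\omega^G)$ is symplectic if and only if $T^\na=\dx\overline{\psi}\wedge\Id$. The key step is to observe that these two torsion conditions must hold simultaneously, hence $\dx\psi\wedge\Id=\dx\overline{\psi}\wedge\Id$ as $T M$-valued $2$-forms on $M$; since the wedge with the identity endomorphism is injective on $1$-forms (for $\dim M\geq 2$, the map $\alpha\mapsto\alpha\wedge\Id$, i.e. $X\wedge Y\mapsto \alpha(X)Y-\alpha(Y)X$, has trivial kernel), this forces $\dx\psi=\dx\overline{\psi}$, so $\dx\varphi_1=0$, and then $\dx\psi=0$ gives $\dx\varphi_2=0$ as well. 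Therefore $T^\na=0$ and both $f_1=\expo^{2\varphi_1}$ and $f_2=\expo^{2\varphi_2}$ are constant (using $M$ connected). Together with $R^\na=0$ from part (i) this says precisely that $M$ is Riemannian flat and the weights are constant, which is the stated necessary condition; conversely, if $M$ is flat with $T^\na=0$ and $f_1,f_2$ constant, then both torsion equations are trivially satisfied ($\dx\psi=\dx\overline\psi=0$) and $R^\na=0$, so by the Theorem both $I^G$ is integrable and $\omega^G$ is closed, hence the structure is Kähler.

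For the final sentence, that $TM$ is then flat: with $\na$ the (Levi-Civita) connection of the flat metric $g$ and $f_1,f_2$ constant, the metric $G=g^{f_1,f_2}=f_1\pi^*g\oplus f_2\pi^*g$ on $TTM=H\oplus V$ is, up to the constant rescalings on each factor, the Sasaki metric built from a flat connection. One checks the curvature $R^{\na^G}$ of the Levi-Civita connection $\na^G$ of $G$ vanishes: all the standard curvature formulas for the Sasaki-type metric (cf.\ the references \cite{AbbCalva,Dom} cited in the excerpt) are polynomial in $R^\na$ and its covariant derivatives — more precisely the difference tensor between $\na^G$ and the flat product connection $\na^{*,f_1}\oplus\na^{*,f_2,'}$ is built entirely from $R^\na$ — so $R^\na=0$ together with $T^\na=0$ makes $G$ flat as well.

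The main obstacle is the injectivity claim for $\alpha\mapsto\alpha\wedge\Id$: one must be slightly careful that this is the correct reading of the notation $\dx\psi\wedge1$ appearing in the Theorem (namely the $\mathrm{End}(TM)$-valued $2$-form $(X,Y)\mapsto \dx\psi(X)Y-\dx\psi(Y)X$), and verify it has no kernel among $1$-forms when $\dim M\geq 2$ — which is immediate by evaluating on $(X,Y)$ with $Y$ chosen so that $\alpha(Y)=0$ but $Y\neq 0$. Everything else is a direct citation of the Theorem plus the elementary fact characterising Kähler among almost Hermitian structures, so no genuinely new computation is required; the flatness of $TM$ at the end is the only place a (well-documented) curvature formula is needed.
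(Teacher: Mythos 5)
Your overall strategy---K\"ahler equals integrable plus symplectic, then intersect parts (i) and (ii) of the preceding Theorem---is exactly what the paper's one-line derivation (``clearly we may draw the following conclusion'') intends, and your injectivity argument for $\alpha\mapsto\alpha\wedge\Id$ is correct. But there is a genuine non-sequitur at the decisive step. From $T^\na=\dx\psi\wedge\Id=\dx\overline{\psi}\wedge\Id$ you correctly obtain $\dx\psi=\dx\overline{\psi}$, i.e.\ $2\,\dx\varphi_1=0$, so $f_1$ is constant. You then write ``and then $\dx\psi=0$ gives $\dx\varphi_2=0$ as well''---but nothing in your argument gives $\dx\psi=0$. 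Since $\psi=\varphi_2-\varphi_1$ and $\overline{\psi}=\varphi_2+\varphi_1$, once $\dx\varphi_1=0$ both $1$-forms equal $\dx\varphi_2$, and the two torsion conditions collapse to the \emph{single} condition $T^\na=\dx\varphi_2\wedge\Id$ together with $R^\na=0$. So what you have actually proved is: K\"ahler $\Rightarrow$ $f_1$ constant, $R^\na=0$ and $T^\na=\dx\varphi_2\wedge\Id$. The remaining implication---that $\varphi_2$ is also constant, hence $T^\na=0$ and $M$ is \emph{Riemannian} flat---is precisely the nontrivial content of the Corollary and is not a formal consequence of the two torsion identities: the residual case is not empty at the level of those identities. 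Indeed, if $\expo^{-2u}g$ is a flat metric (so $g$ conformally flat, not necessarily flat), the connection $\na_XY=\nag_XY-Y(u)X+\langle X,Y\rangle\grad u$ is $g$-metric, has torsion $\dx u\wedge\Id$ and has curvature equal to that of the Levi--Civita connection of $\expo^{-2u}g$, hence zero. Excluding this situation therefore requires additional input---the direct computation of $\na^G I^G=0$ carried out in the reference \cite{Alb3.0}---and cannot be obtained by the algebra you use.

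Concerning the final sentence, your idea is workable but the paper's route is cleaner and is the one it explicitly points to: once $T^\na=0$, $R^\na=0$ and $f_1,f_2$ are constant, Proposition \ref{torsaodenablaoplusnabla}(i) gives that the product connection $\na^*\oplus\na^*$ has torsion $\pi^*T^\na+\calR=0$; since it is also metric for $G$ (the weights being constant), it \emph{is} the Levi-Civita connection $\na^G$, and it is flat because $\na$ is. No appeal to the general curvature formulas for Sasaki-type metrics is needed.
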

The last assertion follows indirectly from Proposition \ref{torsaodenablaoplusnabla}.
\begin{Rema}
Recall $T^*M$ has a natural symplectic structure. It arises as $\dx\lambda$ where
$\lambda$ is the Liouville 1-form (\cite{Geig}): the unique 1-form $\lambda$ on
$T^*M$ such that on a section $\alpha$
\begin{equation}
 \lambda_\alpha=\alpha\circ\pi_*
\end{equation}

When we introduce the metric, the tangent and cotangent (sphere) bundles become isometric bundles.
With a little computation we find that the 1-form $\theta=\xi^\flat\circ
B=(B^{\mathrm{ad}}\xi)^\flat$ corresponds with the Liouville form, so it does not
depend on the connection. Knowing the torsion of
$\na^*\oplus\na^*$ for any metric connection on $M$, it is easy to deduce, cf.
\cite{Alb2}, that for any radius function we have: 
\begin{equation}
 \dx\theta=\omega^S+\theta\circ T^\na.
\end{equation}
The same is to say $\omega^S$ corresponds with the pull-back of the Liouville symplectic 2-form if
and only if $T^\na=0$. Then a Hamiltonian theory of the geodesic flow is manageable.
We also remark that the geodesic vector field in the sense e.g. of \cite{Geig},
i.e. the vector field $B^{\mathrm{ad}}\xi$ in our setting, is just the same as the
geodesic spray in the sense e.g. of \cite{Dom,Sakai}.
\end{Rema}

\subsection{Chern and Stiefel-Whitney classes of $TM$}

Let us continue with the structures $G,I^G$ on the tangent bundle, induced from any metric connection $\na$, and the same notation from above.

By a deformation retract on the fibres of $\pi:TM\rr M$, there is an identification of cohomology spaces $H^*(M)=H^*(TM)$. This is valid for any coefficient ring. In particular $H^i(TM)=0,\ \forall i>m$. Let $w_j$ denote the $j$-th Stiefel-Whitney class of $M$ --- which is a Stiefel-Whitney class of $TM$ as a vector bundle. Let $w=\sum w_j$ denote the total Stiefel-Whitney class.
\begin{teo}
For any manifold $M$ of dimension $m$, the Euler class of the manifold $TM$ vanishes and the total Stiefel-Whitney class is
\begin{equation}\label{totalSWclassofTTM}
 w(TTM)=w^2=\sum_{j=0}^{[m/2]}w_j^2.
\end{equation}
\end{teo}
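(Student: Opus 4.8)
The plan is to exploit the exact sequence (\ref{sequenciaexata}), which over the manifold $TM$ reads $0\to V\to TTM\to \pi^*TM\to 0$, together with the canonical identification $V\cong\pi^*TM$. Since this sequence is an extension of vector bundles over $TM$, it splits smoothly (the horizontal distribution $H$ is such a splitting, for any choice of connection), so as a real vector bundle
\begin{equation}
 TTM\ \cong\ \pi^*TM\,\oplus\,\pi^*TM\ =\ \pi^*(TM\oplus TM).
\end{equation}
From here the Whitney sum formula for Stiefel-Whitney classes gives $w(TTM)=\pi^*\bigl(w(TM)\bigr)^2$, and the Euler class of a bundle that splits off a copy of itself (equivalently, that admits a nowhere-zero section after the splitting, or simply because $e$ is multiplicative and $e(TTM)=\pi^*e(TM)^2$ while $e(TM)^2$ already lives in top degree $2m$ which is killed once we recall $H^i(TM)=0$ for $i>m$) vanishes. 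I would phrase the Euler class part cleanly: $e(TTM)=\pi^*(e(TM)\smile e(TM))$, and $e(TM)\smile e(TM)\in H^{2m}(M)=0$ since $\dim M=m$; alternatively $e(TTM)=e(\pi^*TM)^2=(\pi^*e(TM))^2$ and one invokes $H^{2m}(TM)\cong H^{2m}(M)=0$.

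The key steps, in order: first, recall from the deformation retract of the fibres that $\pi^*$ induces the isomorphism $H^*(M)\cong H^*(TM)$ with arbitrary coefficients, so I may compute characteristic classes of $TTM$ as pull-backs of classes on $M$ and suppress $\pi^*$ in the notation. Second, establish the bundle isomorphism $TTM\cong\pi^*TM\oplus\pi^*TM$ using that $H$ is a complement to $V$ in $TTM$ and $\dx\pi\colon H\xrightarrow{\sim}\pi^*TM$ while $V=\pi^*TM$ canonically; this is already all set up in Section~2.1 of the paper (the maps $\theta,\theta^t$ make the identification explicit). Third, apply the Whitney product formula: $w(TTM)=w(\pi^*TM)\smile w(\pi^*TM)=\pi^*\bigl(w(TM)^2\bigr)=\pi^*\bigl(\sum_j w_j\bigr)^2$. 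Fourth, observe that under the identification $H^*(TM)=H^*(M)$ this reads $w(TTM)=\bigl(\sum_j w_j\bigr)^2=\sum_{j,k}w_jw_k$; since we are in characteristic $2$, all cross terms $w_jw_k+w_kw_j=2w_jw_k$ vanish and only the diagonal survives, giving $\sum_j w_j^2=\sum_{j=0}^{[m/2]}w_j^2$, the truncation at $[m/2]$ being forced because $w_j^2\in H^{2j}(M)$ and $H^i(M)=0$ for $i>m$. Fifth, for the Euler class: $e(TTM)=e(\pi^*TM\oplus\pi^*TM)=e(\pi^*TM)^2=\pi^*(e(TM)^2)$, and $e(TM)^2\in H^{2m}(M)=0$, so $e(TTM)=0$.

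The only genuinely substantive point — really a matter of being careful rather than hard — is the smooth splitting of the exact sequence and the resulting identification $TTM\cong\pi^*TM\oplus\pi^*TM$ as \emph{real} bundles; but this is exactly the horizontal/vertical decomposition $TTM=H\oplus V$ recalled in Section~2.1, with $V=\pi^*TM$ canonically and $H\cong\pi^*TM$ via $\dx\pi$ (or via $\theta^t$), valid for any connection on $M$, so there is nothing to prove here beyond citing the setup. The characteristic-class manipulations are then formal: naturality of $w$ and $e$ under pull-back, the Whitney sum formula, the vanishing of $2$-torsion expressions in $\Z/2$ coefficients, and the dimensional vanishing $H^{>m}(M)=0$. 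I do not expect any real obstacle; the statement is essentially a corollary of the product structure of $TTM$.
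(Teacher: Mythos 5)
Your proposal is correct and follows essentially the same route as the paper: the splitting $TTM=\pi^*TM\oplus\pi^*TM$, the Whitney product formula with naturality, the vanishing of cross terms in $\Z_2$ coefficients, and the dimensional vanishing of the top-degree Euler class. You merely spell out the steps (notably $H^{2m}(TM)\cong H^{2m}(M)=0$ and the mod-$2$ cancellation of $w_jw_k$ terms) that the paper's one-line proof leaves implicit.
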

\begin{proof}
Being a top degree class, the Euler class must vanish. Since $TTM=\pi^*TM\oplus\pi^*TM$, the Whitney product Theorem and the naturality of the characteristic classes (\cite{MilSta}) immediately give $w(TTM)=w(TM)w(TM)=w^2$. Recall the coefficients are in $\Z_2$, hence the second identity in (\ref{totalSWclassofTTM}).
\end{proof}
\begin{teo}
The Chern classes of the {\em{manifold}} $TM$ with almost complex structure $I^G$ are
the Chern classes of the complexified {\em{tangent bundle}}, $TM\otimes_{\R}\C\rr M$.
\end{teo}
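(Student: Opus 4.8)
\noindent The plan is to identify $(TTM,I^G)$, as a complex vector bundle over the manifold $TM$, with the pull-back $\pi^*(TM\otimes_\R\C)$ (up to complex conjugation, which is harmless for Chern classes), and then to transport the classes down to $M$ via the identification $H^*(TM)=H^*(M)$ recalled above, coming from the fibrewise deformation retract.

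\smallskip

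\noindent First I would remove the weight functions by a deformation. Recall that $\theta$ carries $H$ isomorphically onto $V$ and kills $V$, that $\theta^t$ carries $V$ isomorphically onto $H$ and kills $H$, and that $\theta^t\theta=\Id_H$, $\theta\theta^t=\Id_V$. Hence for each $t\in[0,1]$ the field of endomorphisms $I_t=\expo^{t\psi}\theta^t-\expo^{-t\psi}\theta$ is well defined, equals $-\expo^{-t\psi}\theta$ on $H$ and $\expo^{t\psi}\theta^t$ on $V$, and satisfies $I_t^2=-\Id$ (on $H$ one gets $-\expo^{-t\psi}\expo^{t\psi}\,\theta^t\theta=-\Id_H$, and likewise on $V$). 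So $\{I_t\}_{t\in[0,1]}$ is a smooth family of almost complex structures on $TTM$ joining $I^S=I_0$ to $I^G=I_1$; assembling it into a complex vector bundle over $TM\times[0,1]$ and using that $TM\times\{0\}\hookrightarrow TM\times[0,1]$ is a homotopy equivalence, the Chern classes are independent of $t$. Thus $c_i(TTM,I^G)=c_i(TTM,I^S)$ for all $i$, and it suffices to treat the Sasaki structure $I^S=\theta^t-\theta$.

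\smallskip

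\noindent Next I would make the identification explicit. Using the canonical equality $V=\pi^*TM$ together with the isomorphism $\dx\pi\colon H\to\pi^*TM$, write a tangent vector of $TM$ as a pair $(\overline X,v)\in\pi^*TM\oplus\pi^*TM$, with $\overline X=\dx\pi(X^h)$ and $v=X^v$; under these identifications both $\theta$ and $\theta^t$ become the identity of $\pi^*TM$ (this is precisely the statement that $\theta$ is an isometry for $g^S$ with inverse $\theta^t$). A one-line computation then shows that $I^S$ becomes $(\overline X,v)\mapsto(v,-\overline X)$, which is the conjugate of multiplication by $i$ on the underlying real bundle $\pi^*TM\oplus\pi^*TM$ of $\pi^*(TM\otimes_\R\C)$; hence $(TTM,I^S)\cong\overline{\pi^*(TM\otimes_\R\C)}$. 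Since the complexification of a real bundle is isomorphic to its own conjugate, $\overline{\pi^*(TM\otimes_\R\C)}\cong\pi^*(TM\otimes_\R\C)$, and their Chern classes coincide. Combining with the previous step, $c_i(TTM,I^G)=\pi^*c_i(TM\otimes_\R\C)$ in $H^*(TM)$, and under $H^*(TM)=H^*(M)$ these are exactly the Chern classes of the complexified tangent bundle $TM\otimes_\R\C\lrr M$.

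\smallskip

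\noindent I expect the only genuinely delicate point to be the sign/conjugation bookkeeping of the middle step — distinguishing $\pi^*(TM\otimes_\R\C)$ from its conjugate — but this costs nothing, since a complexification is self-conjugate; alternatively one may fix the identification $V\cong\pi^*TM$ by $-\Id$ and obtain the two complex bundles isomorphic on the nose. The homotopy of the first step requires nothing beyond the invertibility of $\theta\colon H\to V$, and the reduction $H^*(TM)=H^*(M)$ is already in hand.
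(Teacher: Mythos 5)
Your proof is correct and follows essentially the same route as the paper: reduce $I^G$ to $I^S$ (you use the homotopy $I_t$, which the paper offers as its parenthetical alternative to the explicit isomorphism $f(X)=X^h+\expo^\psi X^v$), then identify the resulting complex bundle with $\pi^*(TM\otimes_\R\C)$ and descend via $H^*(TM)=H^*(M)$. The only cosmetic difference is in the second step, where the paper works with the $+i$-eigenbundle $T^+TM\cong H^c$ while you identify $(TTM,I^S)$ directly with the conjugate of $\pi^*(TM\otimes_\R\C)$ and invoke self-conjugacy of complexifications; both dispose of the sign issue correctly.
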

\begin{proof}
The complex structure $I^G$ in $TTM$ is equivalent to $I^S$. One complex isomorphism is given by $f:X\mapsto X^h+\expo^\psi X^v$. Indeed, $\forall X\in TTM$,
\begin{equation*}
\begin{split}
  I^S\circ f(X)=(B^{\mathrm{ad}}-B)(X^h+\expo^\psi X^v)\ =\ -B
X^h+\expo^\psi B^{\mathrm{ad}}X^v \qquad\qquad \\  
= \expo^\psi B^{\mathrm{ad}}X^v-\expo^\psi\expo^{-\psi}B X^h\ =\ f\circ I^G(X).
\end{split}
\end{equation*}
By the functorial properties, we just have to compute the Chern classes of $I^S$. (Another argument: the homotopy induced by $t\psi,\ t\in[0,1]$, preserves the Chern classes.)
Now, the Chern classes of an almost complex manifold $(N,J)$ are the Chern classes of the $\C$-vector bundle $T^+N$, the $+i$-eigenbundle of $J$ where $i=\sqrt{-1}$. In our case, 
\[ T^+TM=H^c=\pi^*TM^c  \]
where $c$ denotes complexification, because of the $\C$-isomorphism induced from
$X\in H\mapsto X+iB X\in T^+TM$. Indeed $I^S(X+iB X)=-B X+iB^{\mathrm{ad}}B
X=i(X+iB X)$. Finally, by trivial reasons, we have $c_j(T^+TM)=c_j(TM^c)$.
\end{proof}
We recall the Chern classes $c_{2j}$ define the Pontryagin classes of $M$, cf. \cite{MilSta},
\begin{equation}
 p_j(M)=(-1)^jc_{2j}(TM\otimes\C).
\end{equation}
Moreover, the Chern classes of $(TM,I^G)$ do not depend on the connection $\na$.

\subsection{Stiefel-Whitney classes of $S_rM$}

Now let $m=n+1$ and let $r>0$ be a scalar function on $M$. We continue to denote by $w=\sum_{j=1}^m w_j$ the total Stiefel-Whitney class of $M$.
\begin{teo}
The total Stiefel-Whitney class of the manifold $S_rM$ is
\begin{equation} \label{classtotalSM}
 w(S_rM)=\sum_{j=0}^{n}\pi^*w_j^2
\end{equation}
and in particular its mod 2 Euler class vanishes.
\end{teo}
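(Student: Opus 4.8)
The plan is to run essentially the same machine as in the preceding theorem on $TTM$, now applied to the restriction of the relevant bundles to $S_rM$ and combined with the Frobenius relation in $\Z_2$-cohomology. First I would record the bundle-theoretic input already available in the text: over the manifold $SM$ the differential $\dx\pi$ gives an isomorphism $H\cong\pi^*TM$, while (for $r$ constant) the vertical bundle splits as $V|_{SM}=\kappa\oplus\R\xi$, where $\R\xi$ is the trivial real line bundle --- it is spanned by the tautological section $\xi$, which is nowhere zero on $SM$. Since $V=\pi^*TM$ as well and adjoining a trivial summand does not change the total Stiefel-Whitney class, this yields $w(H)=w(\kappa)=\pi^*w$.

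Next I would apply the Whitney product Theorem (\cite{MilSta}) to $TSM=H\oplus\kappa$, getting $w(TSM)=w(H)\,w(\kappa)=(\pi^*w)^2=\pi^*(w^2)$. Over $\Z_2$ the "freshman's dream" identity $\bigl(\sum_j w_j\bigr)^2=\sum_j w_j^2$ holds, the cross terms $2w_iw_j$ vanishing in characteristic two, so $w(TSM)=\sum_{j\ge0}\pi^*w_j^2$. Then I would truncate the sum: $w_j(M)=0$ for $j>m=n+1$, while the would-be top term $\pi^*w_{n+1}^2$ lives in $H^{2n+2}(SM;\Z_2)$, which is zero since $\dim SM=2n+1$; hence only $j=0,\dots,n$ survive, which is the asserted formula.

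For the vanishing of the Euler class I would simply note that $SM$ carries a nowhere-vanishing vector field, namely the geodesic field $\theta^t\xi$: it lies in $H\subset TSM$ and satisfies $\dx\pi_u(\theta^t\xi)=u\ne 0$ for every $u\in SM$, so it is non-zero everywhere. Therefore $\chi(SM)=0$ and the Euler class of the manifold $SM$ vanishes. (Alternatively this is already forced by the displayed formula: every summand $\pi^*w_j^2$ sits in the even degree $2j\le 2n$, so the top component $w_{2n+1}(TSM)$ is zero.)

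The computation is short and essentially formal; the only genuinely delicate step is the reduction from $(\pi^*w)^2$ to $\sum_{j=0}^n\pi^*w_j^2$, which uses both the characteristic-two collapse of the mixed products and the dimensional truncation in $H^*(SM;\Z_2)$. One should also be careful to invoke the splitting $V|_{SM}=\kappa\oplus\R\xi$ with the $\xi$-line genuinely trivial, so that one obtains $w(\kappa)=\pi^*w$ and not merely a divisor of it.
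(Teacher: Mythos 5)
Your proof is correct and follows essentially the same route as the paper's: $TSM=H\oplus\kappa$, the identifications $w(H)=w(\kappa)=\pi^*w$ via the trivial line $\R\xi$, and the Whitney product formula. The only differences are cosmetic --- the paper truncates the sum by observing $w_m(\kappa)=0$ for rank reasons rather than by your dimension count on $H^{2n+2}(SM;\Z_2)$, and it leaves the Euler-class vanishing implicit, whereas your geodesic-field argument makes it explicit (note only that your ``alternative'' via the displayed formula controls just the mod-$2$ reduction of the Euler class, so the vector-field argument is the one to keep).
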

\begin{proof}
First suppose $r$ is constant. Then the $n$-vector bundle $\kappa:=\xi^\perp\subset V$ sits in $TS_rM=H\oplus\kappa$ where we assume e.g. the Sasaki metric. We have $w(\pi^*TM)=w(H)=\pi^*w$. Clearly
$w(\pi^*TM)=w(\kappa\oplus\R\xi)=w(\kappa)$. Hence
\begin{equation*}
 w(TS_rM)=w(H\oplus\kappa)=w(\pi^*TM)^2=\pi^*w^2.
\end{equation*}
Notice $w_m(\kappa)=0$ due to rank of $\kappa$ being just $n$. Independently, $0=w_{2n+1}(S_rM)=e(S_rM)\mod 2$. Using the homeomorphism $h:S_1M\lrr S_rM,\ h(u)=ru$,  we have the result for any function $r$.
\end{proof}
\begin{Rema}
1. The results show that the odd degree Stiefel-Whitney classes of the manifolds $TM$ 
and $S_rM$ vanish.\\
2. We observe the independence of (\ref{classtotalSM}) from $r$. Moreover, always $w_1(S_rM)=0$, as expected because $TM$ is always oriented and $\xi$ induces an orientation on the submanifold.\\
3. If $M$ has a finite good cover, is oriented, and admits a non-vanishing vector
field, then we deduce $H^*(S_rM)=H^*(M)\otimes H^*(S^n)$ by the Theorem of Leray-Hirsh
(cf. \cite{BottTu}). In particular $\pi^*$ is an isomorphism $H^i(S_rM)=H^i(M)$ of
cohomology spaces up to degree $i\leq n-1=m-2$. By contrast, we have proved $\pi^*(w_m)=0$.
\end{Rema}
Since $w_2(S_rM)=w_1^2$, we have the following conclusion.
\begin{coro}
For any oriented Riemannian manifold $M$, the manifold $S_rM$ is spin.
\end{coro}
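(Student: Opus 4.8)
The plan is to invoke the classical criterion that a smooth manifold $X$ admits a spin structure if and only if $w_1(TX)=0$ and $w_2(TX)=0$ (applied componentwise if $M$ is disconnected), and then simply to read off these two classes from the total Stiefel--Whitney class of $SM$ computed in the previous Theorem,
\[
 w(TSM)=\sum_{j=0}^{n}\pi^*w_j^2 .
\]

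First I would extract the degree-one part. Each summand $\pi^*w_j^2$ sits in cohomological degree $2j$, so the only term that could contribute to $H^1(SM)$ is $\pi^*w_0^2=1$, which lies in degree zero; hence $w_1(TSM)=0$. This is consistent with the earlier observation that $TM$ is always oriented and that the transverse field $\xi$ orients the hypersurface $SM\subset TM$. Next I would extract the degree-two part: again by degree counting the unique summand contributing to $H^2(SM)$ is $\pi^*w_1^2=(\pi^*w_1)^2$, so $w_2(TSM)=(\pi^*w_1(M))^2$. Now I use the hypothesis that $M$ is oriented, i.e. $w_1(M)=0$; this forces $w_2(TSM)=0$ as well.

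With both $w_1(TSM)$ and $w_2(TSM)$ vanishing, the obstruction to a spin structure on $SM$ disappears, and the corollary follows. There is no real obstacle here: the content of the argument — the identification $TSM=H\oplus\kappa$ with $w(H)=w(\kappa)=\pi^*w$ and hence $w(TSM)=\pi^*w^2$ — was already carried out in the preceding Theorem, so this step is purely formal.
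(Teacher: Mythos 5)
Your proposal is correct and follows exactly the paper's route: the paper derives the corollary from the same degree-by-degree reading of $w(TSM)=\pi^*w^2$, noting $w_1(SM)=0$ always and $w_2(SM)=\pi^*w_1^2=0$ when $M$ is oriented. Your write-up merely spells out the degree-counting that the paper leaves implicit.
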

Recall $w_2$ is also the obstruction for a closed 7-manifold to admit a $\mathrm{G}_2$-structure. We have explicitly constructed a natural $\mathrm{G}_2$-structure on $S_1M$, for any oriented Riemannian 4-manifold $M$, cf.~\cite{Alb2,Alb2.1} and the references therein.

\vspace{2cm}

%\medskip

%\end{color}

\end{document}